\newcommand{\imp}[1]{\textcolor{black}{#1}}
\begin{document}
\title{On the Sprague-Grundy values of games with a pass}
%
%
\author{Hikaru Manabe\inst{1}\orcidID{0009-0000-1835-493X} \and
Ryohei Miyadera\inst{2}\orcidID{0009-0003-7708-5970} \and
Koki Suetsugu\inst{3,4,5}\orcidID{0000-0003-2529-7501}}
\authorrunning{H. Manabe et al.}
%
\institute{University of Tsukuba College of Informatics, 1-1-1 Tennodai, Tsukuba, Japan\\ 
\email{urakihebanam@gmail.com} \\
\and Keimei Gakuin Junior and Senior High School, 9-5-1 Yokoo, Suma-ku, Kobe, Japan\\
\email{runnerskg@gmail.com}\\
\and Osaka Metropolitan University, 3-3-138 Sugimoto, Sumiyoshi-ku, Osaka, Japan 
\and Waseda University, 513 Waseda-Tsurumaki-Cho, Sinjuku-ku, Tokyo, Japan 
\and Gifu University, 1-1 Yanagido,  Gifu, Japan \\
\email{suetsugu.koki@gmail.com} }
\maketitle              
\begin{abstract}
In this paper, we consider two-player impartial games with a pass-move. A disjunctive compound of games is a position in which, on each turn, the current player chooses one of the components and makes a legal move in it. 
\imp{For disjunctive compounds, it is known that the time to determine which player has a winning strategy is bounded by the time to compute the SG-values of the components plus the time for their XOR.} However, if we allow a pass-move during the play, the analysis of such games becomes much more difficult.
\imp{A pass-move allows each player to skip exactly one turn in non-terminal positions during the game, after which neither player may use a pass-move again.} We establish a homomorphism on the SG-values of games with a pass-move. That is, if every component satisfies a condition called one-move game, the SG-value of the disjunctive compound of the components with a pass-move is the same as the SG-value of \textsc{nim} with a pass-move where the size of every pile is the same as the SG-value of every component of the compound.
This guarantees that 
\imp{the time to determine which player has a winning strategy in a disjunctive compound with a pass can be bounded by the sum of the time to determine SG-values of all components  without a pass and a position in \textsc{nim} with a pass.} We also show how the homomorphism is used for determining SG-values of some \textsc{chocolate games}.

\keywords{Combinatorial game theory \and Impartial games \and Sprague-Grundy value \and Pass-move \and One-move games \and \textsc{Chocolate game}.}
\end{abstract}

\section{Introduction}
\label{sec:intro}

In this paper, we consider two-player \emph{impartial games}, that is, two-player games in which, in every position, the sets of options for both players are the same. We assume that the games are under \emph{normal play convention}, that is, the player who has no legal move \imp{loses}.
Developing effective methods for determining whether a player has a winning strategy is a central topic in theoretical computer science. For example, \textsc{nim} is a two-player game with several piles of tokens, and in each turn, the current player chooses one of the piles and removes any positive number of tokens from the pile. Since we assume that games are under normal play convention, the player who has no legal move is the loser.
Bouton showed that in \textsc{nim}, the previous player has a winning strategy  if and only if the exclusive OR (XOR) of the numbers of tokens in the piles is zero \cite{bou}. This is  a much faster way to determine which player has a winning strategy than brute-force algorithms.

Furthermore, for any impartial games under normal play convention,  
Sprague and Grundy independently generalized Bouton's theory and showed that when a position is regarded as a disjunctive compound of some components, that is, in each turn, the player chooses exactly one of the components and makes a legal move on it, the previous player has a winning strategy if and only if the XOR of the parameter, called \emph{SG-value}, of each component is zero \cite{spr}, \cite{gru}. 
Therefore, if a position is a disjunctive compound, the time for determining which player has a winning strategy is bounded by the sum of the times for calculating SG-values of components and the time for calculating XOR of the SG-values.

On the other hand, it becomes more difficult to analyze games if we allow a pass-move in the play.
Under this convention, a pass-move may be used at most once in the game, but not when the position is terminal. Once the pass has been used by either player, it is no longer available. 
There are some early results considering this convention \cite{nimpass}, \cite{LC15}, \cite{integers1}, \cite{CLLW18}. Also, this convention is generalized as a sum of games called ``split sum'' in \cite{Hirsch20}.

\textsc{Nim} is a good example to explain how the pass-move makes it difficult to analyze games. 
\imp{As mentioned above, it is easy to solve three-pile \textsc{nim}.}
However, no mathematical formula is known for the previous player's winning position when a pass-move is allowed in this ruleset and this has been posed as an open problem in the list of unsolved problems in combinatorial game theory \cite{unsolve}.

In this paper, we consider disjunctive compounds with a pass-move and show that if the components satisfy a special property, called one-move game, the time \imp{to determine} which player has a winning strategy can be bounded by the sum of the time \imp{to determine SG-values of all components without a pass and a position in \textsc{nim} with a pass}, similar to the case of standard disjunctive compounds.

The outline of this paper is as follows.
In the latter part of this section, we introduce necessary early results for completeness. In Section \ref{sec:pass}, we establish a homomorphism of SG-values on disjunctive compound with a pass, which makes the calculation for determining which player has a winning strategy easier.
In Section \ref{sec:choco}, we show how the result can be used for analyzing games by using an example, \textsc{chocolate game}.
\subsection{Disjunctive compound}



We briefly review some of the necessary concepts of combinatorial game theory; refer to \cite{lesson}  for more details. Let $\mathbb{Z}_{\ge 0}$  be the set of nonnegative integers.


\begin{definition}\label{NPpositions} 
\imp{A position is referred to as a $\mathcal{P}$-{\em position} (resp. an $\mathcal{N}$-{\em position}) if  the previous (resp. next) player has a winning strategy in the position.}
\end{definition}

\begin{definition}\label{defofmexgrundy2}
\mbox{}
\begin{enumerate}
    
\item	For any position $g$ of an impartial game, there is a set of positions that can be reached in precisely one move from $g$, which is denoted by $\mathrm{move}(g)$. An element in $\mathrm{move}(g)$ is called an \emph{option} of $g$. 
Let $\mathrm{move}^0(g) = \{g\},$ and $\mathrm{move}^n(g) = \bigcup_{h\in \mathrm{move}^{n-1}(g)}\mathrm{move}(h)$ for any $n \ge 1$.
If $g' \in \mathrm{move}^n(g)$ for $n \ge 0$, $g'$ is a \emph{follower} of $g$. 
\item The \emph{minimum excluded value} $(\mathrm{mex})$ of a set $S$ of nonnegative integers is the least nonnegative integer that is not in $S$. That is, $\mathrm{mex}(S) = \min( \mathbb{Z}_{\ge 0} \setminus S)$. 
\item Each position $g$ of an impartial game has an associated \emph{Sprague-Grundy value}, or \emph{SG-value}, which is denoted by $\mathcal{G}(g)$.
	The SG-value is obtained recursively as follows. 
	$\mathcal{G}(g) = \mathrm{mex}(\{\mathcal{G}(h)\mid h \in \mathrm{move}(g)\}).$
\end{enumerate}
\end{definition}

\begin{theorem}[\cite{spr}, \cite{gru}]\label{theoremofsumg2}
For any position $g$ of an impartial game, 
	$\mathcal{G}(g) =0$ if and only if $g$ is a $\mathcal{P}$-position \imp{ and $\mathcal{G}(g) \neq 0$ if and only if $g$ is an $\mathcal{N}$-position.}
\end{theorem}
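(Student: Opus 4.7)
I would prove the theorem by induction on the rank of $g$, defined as the length of the longest play starting at $g$. Because the excerpt works with impartial games under the normal play convention and implicitly assumes plays terminate, every position has a finite rank, which validates the induction. The two equivalences in the statement are contrapositives of each other (together with the obvious dichotomy that every position is either an $\mathcal{N}$- or a $\mathcal{P}$-position), so I would prove them simultaneously.

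For the base case, $g$ is a terminal position with $\mathrm{move}(g) = \emptyset$, so $\mathcal{G}(g) = \mathrm{mex}(\emptyset) = 0$; since the player to move has no legal move, they lose by normal play and $g$ is a $\mathcal{P}$-position. Both equivalences hold at the base.

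For the inductive step I would assume the theorem for every $h \in \mathrm{move}(g)$ and split on whether $\mathcal{G}(g)$ is zero. If $\mathcal{G}(g) = 0$, then by the definition of $\mathrm{mex}$ every option $h$ satisfies $\mathcal{G}(h) \neq 0$, so by the induction hypothesis every option is an $\mathcal{N}$-position; whichever move the current player makes, the opponent inherits a winning position, and hence $g$ is a $\mathcal{P}$-position. If $\mathcal{G}(g) \neq 0$, then $0 \in \{\mathcal{G}(h) : h \in \mathrm{move}(g)\}$, so some option $h$ has $\mathcal{G}(h) = 0$; by induction this $h$ is a $\mathcal{P}$-position, and the current player can move there to place the opponent in a losing position, so $g$ is an $\mathcal{N}$-position.

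The main (and really only) obstacle is making the well-foundedness of the induction rigorous: one needs the follower relation on positions to admit no infinite descending chains. In the present excerpt this is built into the finite-play setting, so once the induction frame is fixed, the rest is just unpacking the definitions of $\mathrm{mex}$ and of $\mathcal{P}$- and $\mathcal{N}$-positions.
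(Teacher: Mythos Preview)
Your argument is correct and is exactly the standard proof of this classical result. The paper itself does not prove this theorem at all; it simply cites Sprague and Grundy and treats the statement as known background, so there is nothing further to compare.
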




\begin{definition}
    Let $g_1, \ldots, g_n$ be positions in impartial games.
    A \emph{compound} of positions is a function $C$ such that
    $C(g_1, \ldots, g_n)$ is also a position in an impartial game.
\end{definition}


\begin{definition}
    Let $g_1, \ldots, g_n$ be positions in impartial games. The \emph{disjunctive compound} of $g_1, \ldots, g_n$, denoted by $C_+(g_1, \ldots, g_n)$, or, $g_1 + \cdots + g_n$, is a position such that the set of options of the position is 
    \begin{eqnarray*}
    \mathrm{move}(C_+(g_1, \ldots, g_n)) = \bigcup_{i=1}^n \{ C_+(g'_1,  \ldots, g'_n) \mid g'_i \in \mathrm{move}(g_i) \text{ and } g'_j = g_j \text{ for any } j \neq i\}.
    \end{eqnarray*}
\end{definition}

In other words, in a disjunctive compound of games, each player chooses exactly one component and makes a legal move on it. \imp{Disjunctive compound is one of the most famous compounds.}

Let $\oplus$ be the XOR operator for binary notation.

\begin{theorem}[\cite{spr}, \cite{gru}]
\label{thm:disjunctive}
For any positions $g_1, \ldots, g_n$ in impartial games, \imp{we have}
    \begin{eqnarray*}
    \mathcal{G}(C_+(g_1, \ldots, g_n)) = \mathcal{G}(g_1) \oplus \cdots \oplus \mathcal{G}(g_n).
    \end{eqnarray*}
\end{theorem}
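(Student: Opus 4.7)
The plan is to first reduce to the two-component case and then prove that case by structural induction on the game forms. Since $\oplus$ is associative and, by definition, $C_+(g_1,\ldots,g_n) = C_+(g_1,\ldots,g_{n-1}) + g_n$, a straightforward induction on $n$ turns the claim into the single identity $\mathcal{G}(g_1+g_2) = \mathcal{G}(g_1)\oplus\mathcal{G}(g_2)$; the base case $n=1$ is immediate.

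For the two-component case, set $a=\mathcal{G}(g_1)$, $b=\mathcal{G}(g_2)$, and $s = a\oplus b$. I would use structural induction on the pair $(g_1,g_2)$, the inductive hypothesis asserting that $\mathcal{G}(g_1'+g_2') = \mathcal{G}(g_1')\oplus\mathcal{G}(g_2')$ whenever $(g_1',g_2')$ is obtained from $(g_1,g_2)$ by a single move in one coordinate. The well-foundedness of this induction is exactly the finiteness-of-play assumption implicit in the recursive definition of $\mathcal{G}$. The task then reduces to showing that $\mathrm{mex}\{\mathcal{G}(h):h\in\mathrm{move}(g_1+g_2)\} = s$, which splits into two clauses.

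Clause (i): every integer $c$ with $0\le c<s$ is realized as the SG-value of some option of $g_1+g_2$. Given such a $c$, set $d = c\oplus s$; then $d\ne 0$, and its leading bit must appear in at least one of $a,b$. If it appears in $a$, then $a\oplus d < a$, so by the mex definition of $a$ there exists $g_1'\in\mathrm{move}(g_1)$ with $\mathcal{G}(g_1') = a\oplus d$, and the inductive hypothesis yields $\mathcal{G}(g_1'+g_2) = (a\oplus d)\oplus b = c$. The case that the leading bit of $d$ lies in $b$ is symmetric. Clause (ii): no option of $g_1+g_2$ has SG-value equal to $s$. If a move in the first component produced $\mathcal{G}(g_1'+g_2) = s$, the inductive hypothesis would give $\mathcal{G}(g_1')\oplus b = a\oplus b$, hence $\mathcal{G}(g_1') = a$, contradicting $a = \mathrm{mex}\{\mathcal{G}(g_1'):g_1'\in\mathrm{move}(g_1)\}$; a move in the second component is handled symmetrically.

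The only nontrivial ingredient is the bit-selection step in clause (i), which is the one place where the XOR operation is used essentially rather than some other binary operation. I do not anticipate a substantive obstacle, since this is the classical Sprague-Grundy theorem and the induction is clean once the well-foundedness of the structural recursion is granted.
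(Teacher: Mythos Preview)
Your argument is the standard proof of the Sprague--Grundy theorem and is correct. One small expository point: in clause~(i) the assertion that the leading bit of $d=c\oplus s$ lies in $a$ or $b$ is not automatic from $d\neq 0$ alone; it uses that $c<s$, which forces the top differing bit between $c$ and $s$ to be a $1$ in $s=a\oplus b$ and hence in exactly one of $a,b$. You clearly have this in mind, but it is worth making explicit.

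As for comparison with the paper: the paper does not prove this theorem. It is stated as a classical result with citations to Sprague and Grundy, and is used as background for the later material on one-pass compounds. So there is no ``paper's own proof'' to compare against; your write-up simply supplies the omitted classical argument.
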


For any compound $C$, let $C^\mathrm{nim}(m_1, \ldots, m_n)$ be a position such that every component $g_i$ of $C$ is a single-pile \textsc{nim} with $m_i$ tokens.
For example, since $n$-pile \textsc{nim} is disjunctive compound of positions of single-pile \textsc{nim}, a position in $n$-pile \textsc{nim} can be denoted by  $C_+^\mathrm{nim}(m_1, \ldots, m_n)$.

The SG-value of $n$-pile \textsc{nim} is equal to the XOR of the numbers of tokens in the piles since the SG-value of one-pile \textsc{nim} with $m$ tokens is $m$. Thus, we have
$
\mathcal{G}(C_+^\mathrm{nim}(m_1, \ldots, m_n)) = m_1 \oplus \cdots \oplus m_n
$
and from Theorem \ref{thm:disjunctive}, we have a homomorphism
\begin{eqnarray*}
\mathcal{G}(C_+(g_1, \ldots, g_n)) = \mathcal{G}(C_+^\mathrm{nim} (\mathcal{G}(g_1), \ldots, \mathcal{G}(g_n))).
\end{eqnarray*}

When we consider compounds other than disjunctive compound, SG-value sometimes does not work. 
Therefore, it is important to reveal that for what compound $C$ and what kind of positions $g_i$, the homomorphism 
\begin{eqnarray*}
\mathcal{G}(C(g_1, \ldots, g_n)) = \mathcal{G}(C^\mathrm{nim} (\mathcal{G}(g_1), \ldots, \mathcal{G}(g_n)))
\end{eqnarray*}
holds. We call this homomorphism an \emph{SG-homomorphism}.






\subsection{Hypergraph compound and SG-decreasing games}

In \cite{hyper}, the \emph{hypergraph compound} of games is defined as follows.

Let $\mathcal{H}$ be a hypergraph $\mathcal{H} \subseteq 2^{[n]} \setminus \{ \emptyset \}$, where $[n] = \{1, 2, \ldots, n\}$. On each vertex $i$ of $\mathcal{H}$, there is an impartial position $g_i$. A player, on their turn, chooses a hyperedge $H \in \mathcal{H}$ and makes a move in every position $g_i$, where $i \in H$.  

The hypergraph compound of $g_1, \ldots, g_n$ is denoted by $C_{\mathcal{H}}(g_1, \ldots, g_n)$ for given hypergraph $\mathcal{H}$.



\begin{definition}
    Let $g$ be a position in an impartial game. $g$ is an \emph{SG-decreasing} game if for any nonnegative integer $n$, every $g' \in \mathrm{move}^n(g)$ satisfies $\mathcal{G}(g') > \mathcal{G}(g'')$ for any $g'' \in \mathrm{move}(g')$.
\end{definition}

Positions in one-pile \textsc{nim}, \textsc{minimal nim} \cite{minnim}, and \textsc{exact-$k$ nim} \cite{exact} with $2k > n$, are examples of SG-decreasing games.

\begin{theorem}[\cite{hyper}]
    Let $\mathcal{H}$ be a hypergraph $\mathcal{H} \subseteq 2^{[n]} \setminus \{ \emptyset \}$. Assume that $g_1, \ldots, g_n$ are SG-decreasing games. Then, \imp{we have} 
    \begin{eqnarray*}
\mathcal{G}(C_{\mathcal{H}}(g_1, \ldots, g_n)) = \mathcal{G}(C^\mathrm{nim}_{\mathcal{H}}(\mathcal{G}(g_1), \ldots, \mathcal{G}(g_n))).
\end{eqnarray*}
\end{theorem}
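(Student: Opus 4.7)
The plan is to proceed by strong induction on the game tree of $C_\mathcal{H}(g_1, \ldots, g_n)$, namely on the tuple $(g_1, \ldots, g_n)$ ordered so that any option of the compound precedes it. Before setting up the induction, I would isolate the following lemma as the engine of the proof: \emph{if $g$ is SG-decreasing with $\mathcal{G}(g) = s$, then the set of SG-values of options of $g$ is exactly $\{0, 1, \ldots, s-1\}$.} The inclusion in $\{0, 1, \ldots, s-1\}$ is just the SG-decreasing property; the reverse inclusion is forced by the definition of $\mathrm{mex}$, since any missing value below $s$ would drop the mex strictly below $s$. In particular, when $s = 0$ the position $g$ is terminal, so SG-decreasing games with SG-value $0$ behave exactly like one-pile \textsc{nim} with $0$ tokens. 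I would also record the immediate fact that every follower of an SG-decreasing game is again SG-decreasing.

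With this lemma in hand, the set of options of $C_\mathcal{H}(g_1, \ldots, g_n)$ can be matched to the set of options of $C_\mathcal{H}^\mathrm{nim}(\mathcal{G}(g_1), \ldots, \mathcal{G}(g_n))$ at the level of SG-value profiles. A hyperedge $H$ is playable in $C_\mathcal{H}(g_1, \ldots, g_n)$ iff every $g_i$ with $i \in H$ has an option, iff $\mathcal{G}(g_i) > 0$ for all $i \in H$, which is precisely the condition for $H$ to be playable in the \textsc{nim} compound. When $H$ is played, the profile $(\mathcal{G}(g'_1), \ldots, \mathcal{G}(g'_n))$ of the resulting option has $\mathcal{G}(g'_i)$ ranging over $\{0, 1, \ldots, \mathcal{G}(g_i) - 1\}$ for $i \in H$ and $\mathcal{G}(g'_i) = \mathcal{G}(g_i)$ for $i \notin H$; by the lemma this range is achieved exactly. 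These are the same profiles as those obtainable from $C_\mathcal{H}^\mathrm{nim}(\mathcal{G}(g_1), \ldots, \mathcal{G}(g_n))$ by playing $H$.

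Applying the induction hypothesis to each such option, using that each $g'_i$ is a follower of an SG-decreasing game and hence SG-decreasing, the SG-values of matched options agree. Thus the multisets (and therefore the sets) of SG-values of options of $C_\mathcal{H}(g_1, \ldots, g_n)$ and of $C_\mathcal{H}^\mathrm{nim}(\mathcal{G}(g_1), \ldots, \mathcal{G}(g_n))$ coincide, and taking mex on both sides concludes the induction.

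The main obstacle I anticipate is the lemma on option SG-values of SG-decreasing games; once it is available the rest is mostly bookkeeping. A minor subtlety to be careful about is that the matching between options of $C_\mathcal{H}$ and options of $C_\mathcal{H}^\mathrm{nim}$ is at the level of SG-value profiles rather than positions: distinct options of some $g_i$ may share an SG-value and thus map to the same single-pile \textsc{nim} option, but since the mex depends only on the set of SG-values reached, this collapse does not affect the argument.
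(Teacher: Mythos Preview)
The paper does not supply a proof of this theorem: it is quoted from \cite{hyper} as background and left unproved here, so there is no in-paper argument to compare against. Your sketch is nonetheless sound. The lemma that an SG-decreasing position $g$ with $\mathcal{G}(g)=s$ has option SG-values exactly $\{0,1,\ldots,s-1\}$ (hence is terminal when $s=0$) follows immediately from the definition together with $\mathrm{mex}$, and the observation that followers of SG-decreasing games are again SG-decreasing is built into the definition via the quantification over $\mathrm{move}^n(g)$. With those in hand, the bijection at the level of SG-profiles between options of $C_\mathcal{H}(g_1,\ldots,g_n)$ and options of $C_\mathcal{H}^{\mathrm{nim}}(\mathcal{G}(g_1),\ldots,\mathcal{G}(g_n))$, plus induction and $\mathrm{mex}$, gives the claim; this is essentially the argument in \cite{hyper} as well.
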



Table \ref{tab:homomorphism} summarizes what compound and what kind of positions satisfy SG-homomorphism. In addition to the shown two pairs, we introduce one-pass compound and one-move games in the next section, as the third example of SG-homomorphism. Actually, the set of one-move games is a subset of impartial games, and the set of SG-decreasing games is a subset of one-move games. Thus, we consider that our result falls between the two results.

\begin{table}[tb]
 \caption{Pairs of compound and positions which satisfy SG-homomorphism.}
    \label{tab:homomorphism}

    \centering
    \begin{tabular}{c|ccc}
        Reference & Sprague \cite{spr} and Grundy \cite{gru}& This paper & Boros et al. \cite{hyper} \\ \hline
        Compound & disjunctive compound & one-pass compound & hypergraph compound \\ 
        Positions & any positions in impartial games & one-move games & SG-decreasing games         
    \end{tabular}
   \end{table}
\subsection{\textsc{Chocolate games}}



\textsc{Chocolate games} were first presented in \cite{robin}. \textsc{Chocolate games} look similar to 
 the \textsc{chomp} game that was presented in \cite{gale}, but how to cut these chocolate bars differs from the cutting rule in \textsc{chomp} game. Therefore, \textsc{chocolate games} differ considerably from \textsc{chomp} games. Given that the classical three-pile \textsc{nim} is mathematically equivalent to a rectangular \textsc{chocolate game}, \textsc{chocolate games} are generalizations of the classical \textsc{nim}.


\imp{A two- or three-dimensional chocolate bar is a grid of squares or cubes containing at least one bitter cell printed in black. See Figures \ref{two2dchoco} and \ref{two3dchoco}. Players alternately break the bar along a groove and take the separated piece. The player who leaves the opponent with only the bitter cell wins. Some examples of cut in three-dimensional chocolate bar are shown in Figures. \ref{vcut1}, \ref{vcut2}, and \ref{hcut1}.}

\begin{figure}[tb]
\begin{tabular}{cc}
\begin{minipage}{.53\textwidth}
\centering
\includegraphics[height=1.4cm]{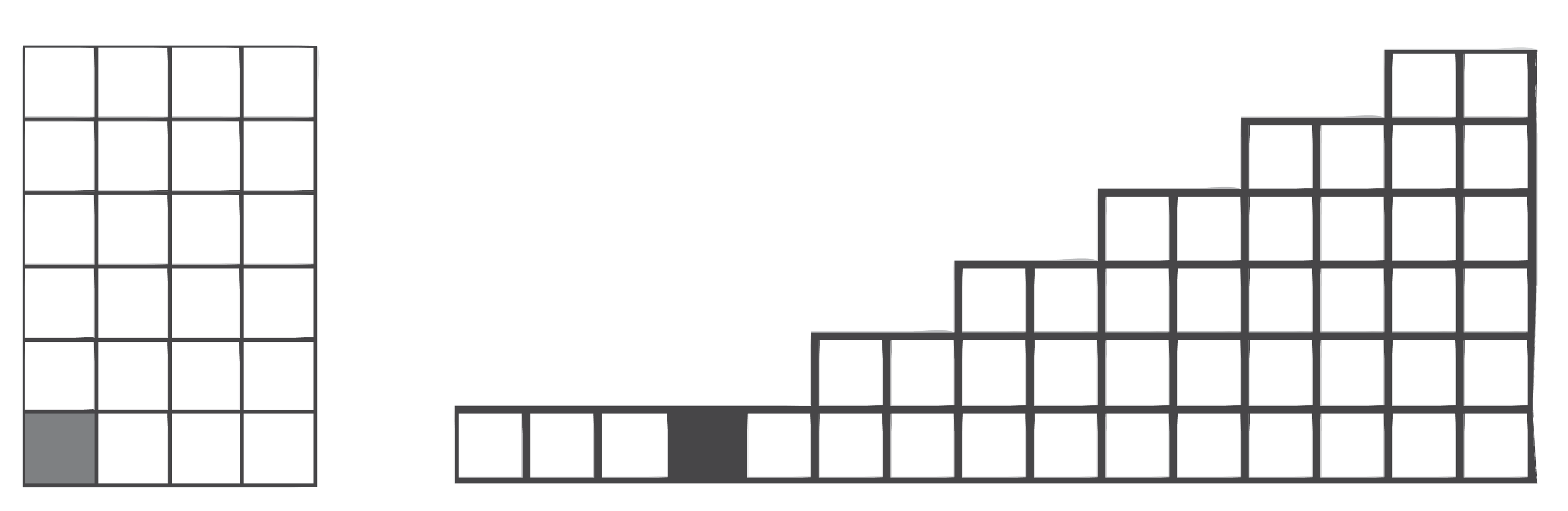}
\caption{Two-dimensional chocolate bars.}\label{two2dchoco}
\end{minipage}
\begin{minipage}{.42\textwidth}
\centering
\includegraphics[height=1.6cm]{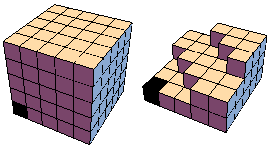}
\caption{Three-dimensional chocolate bars.}\label{two3dchoco}
\end{minipage}
\end{tabular}
\end{figure}

\begin{figure}[tb]
\begin{tabular}{ccc}
\begin{minipage}{.31\textwidth}
\centering
\includegraphics[height=2.0cm]{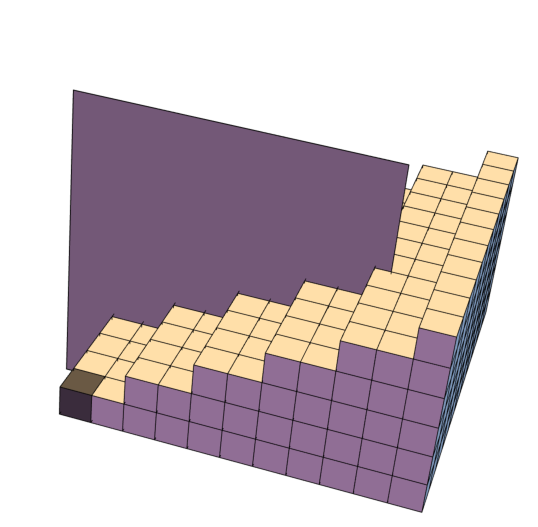}
\caption{A vertical cut.}\label{vcut1}
\end{minipage}
\begin{minipage}{.26\textwidth}
\centering
\includegraphics[height=2.0cm]{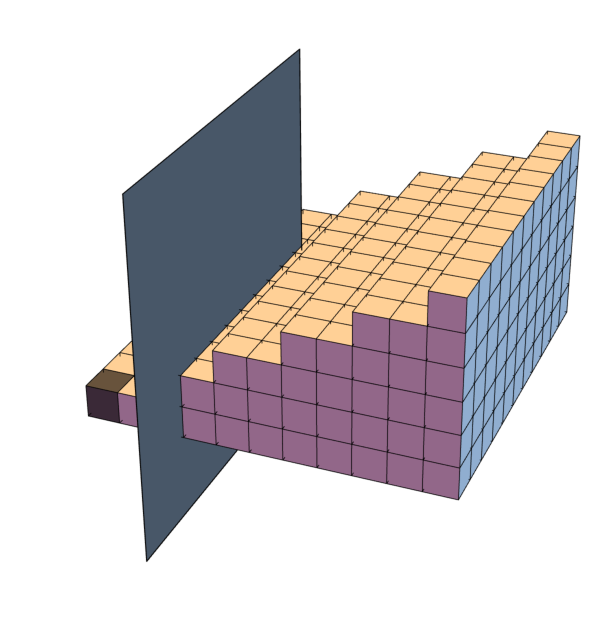}
\caption{A vertical cut.}\label{vcut2}
\end{minipage}
\begin{minipage}{.31\textwidth}
\centering
\includegraphics[height=1.2cm]{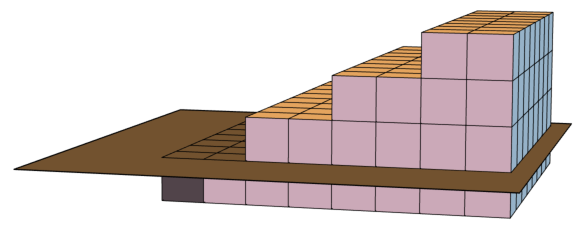}
\vspace{0.9cm}
\caption{A horizontal cut.}\label{hcut1}
\end{minipage}
\end{tabular}
\end{figure}


\begin{definition}\label{definitionoffunctionf3d}
\mbox{}
A single-variable function $f$ of $\mathbb{Z}_{\geq0}$ onto itself is said to be a \emph{monotonically increasing} if
$f(u) \leq f(v)$ for $u,v \in \mathbb{Z}_{\geq0}$, with $u \leq v$, \imp{and} 
a two-variable function $F: \mathbb{Z}_{\geq0}\times \mathbb{Z}_{\geq0} \rightarrow \mathbb{Z}_{\geq0}$ is said to be a \emph{monotonically increasing} if $F(u,v) \leq F(x,z)$ for $x,z,u,v \in \mathbb{Z}_{\geq0}$ with $u \leq x$ and $v \leq z$.
\end{definition} 


\begin{definition}\label{defofbarwithfunc3d}
Let $F$ be a monotonically increasing two-variable function. Let $x,y,z \in \mathbb{Z}_{\geq0}$ such that $y \leq F(x,z)$.
The three-dimensional chocolate bar comprises a set of $1 \times 1 \times 1$ boxes. 
For $u,w \in \mathbb{Z}_{\geq0}$ such that $u \leq x$ and $w \leq z$, the height of the column at position $(u,w)$ is $ \min (F(u,w),y) +1$. There is a bitter box at position $(0,0)$.
We denote this chocolate bar by $CB(F,x,y,z)$. Note that $x+1, y+1$, and $z+1$ are the length, height, and width of the bar, respectively.
\end{definition}

Next, we define $\mathrm{move}_F(x, y, z)$ in Definition \ref{movefor3dimension}. $\mathrm{move}_F(x, y, z)$ is a set that contains all the positions that can be directly reached from position $CB(F,x, y, z)$ in a single step. That is, $(x', y', z') \in \mathrm{move}_F(x, y, z)$ if and only if $CB(F, x', y', z') \in \mathrm{move}(CB(F, x, y, z))$.

\begin{definition}\label{movefor3dimension}
	For $x,y,z \in \mathbb{Z}_{\ge 0}$, 
\begin{displaymath}
 \mathrm{move}_F(x,y,z)=\{(u,\min(F(u,z),y),z ):u<x \} \cup \{(x,v,z ):v<y \}   \nonumber    
\end{displaymath}
\begin{displaymath}
\cup \{ (x,\min(y, F(x,w) ),w ):w<z \},\nonumber  
\end{displaymath}
where $u,v,w \in \mathbb{Z}_{\ge 0}$.
\end{definition}
The following examples show how the coordinates change when we cut chocolate bars. Figure \ref{coordinate3d} shows which direction corresponds to $x, y,$ or $z$. Let $\lfloor \ \rfloor$ be the floor function.
\begin{example} 
 Let $F_1(x,z)=$ $ \lfloor \frac{x+z}{3}\rfloor$. \imp{$CB(F_1, 7, 6, 13)$ and $CB(F_1, 7, 3, 4)$, shown in Figures \ref{f1367} and \ref{f437}, are positions of this \textsc{chocolate game} and $CB(F_1, 7, 3, 4)$ is an option of $CB(F_1, 7, 6, 13)$.}
\begin{figure}[tb]
\begin{tabular}{ccc}
\begin{minipage}{.3\textwidth}
	\centering
\includegraphics[height=1.8cm]{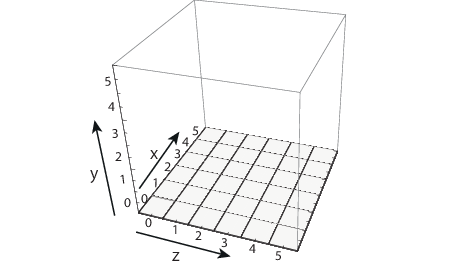}
  \caption{Coordinate.}\label{coordinate3d}
	\end{minipage}
\begin{minipage}{.33\textwidth}
	\centering
\includegraphics[height=1.8cm]{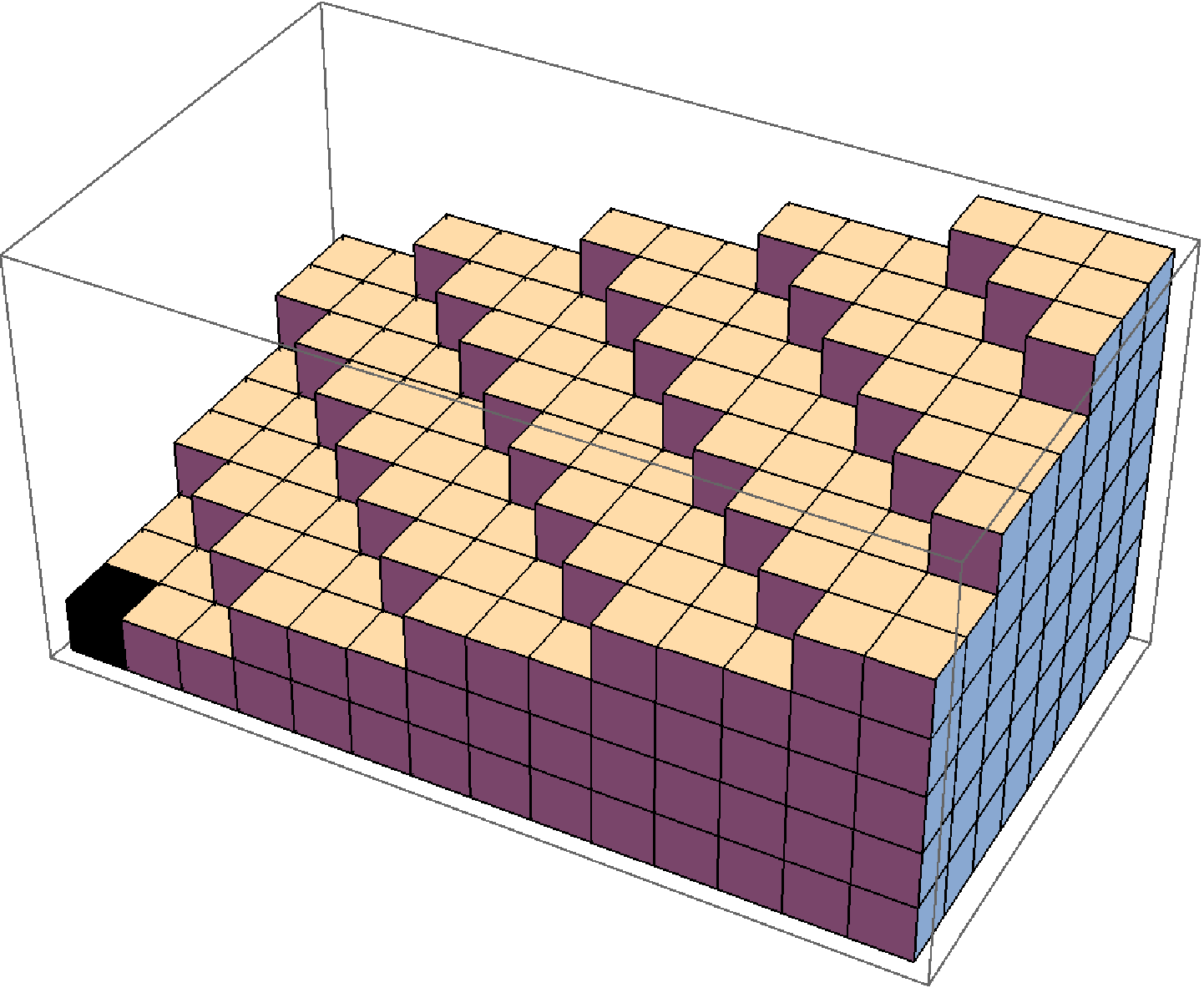}
  \caption{ $CB(F_1,7,6,13)$}\label{f1367}
\end{minipage}
\begin{minipage}{.33\textwidth}
		\centering
\includegraphics[height=1.8cm]{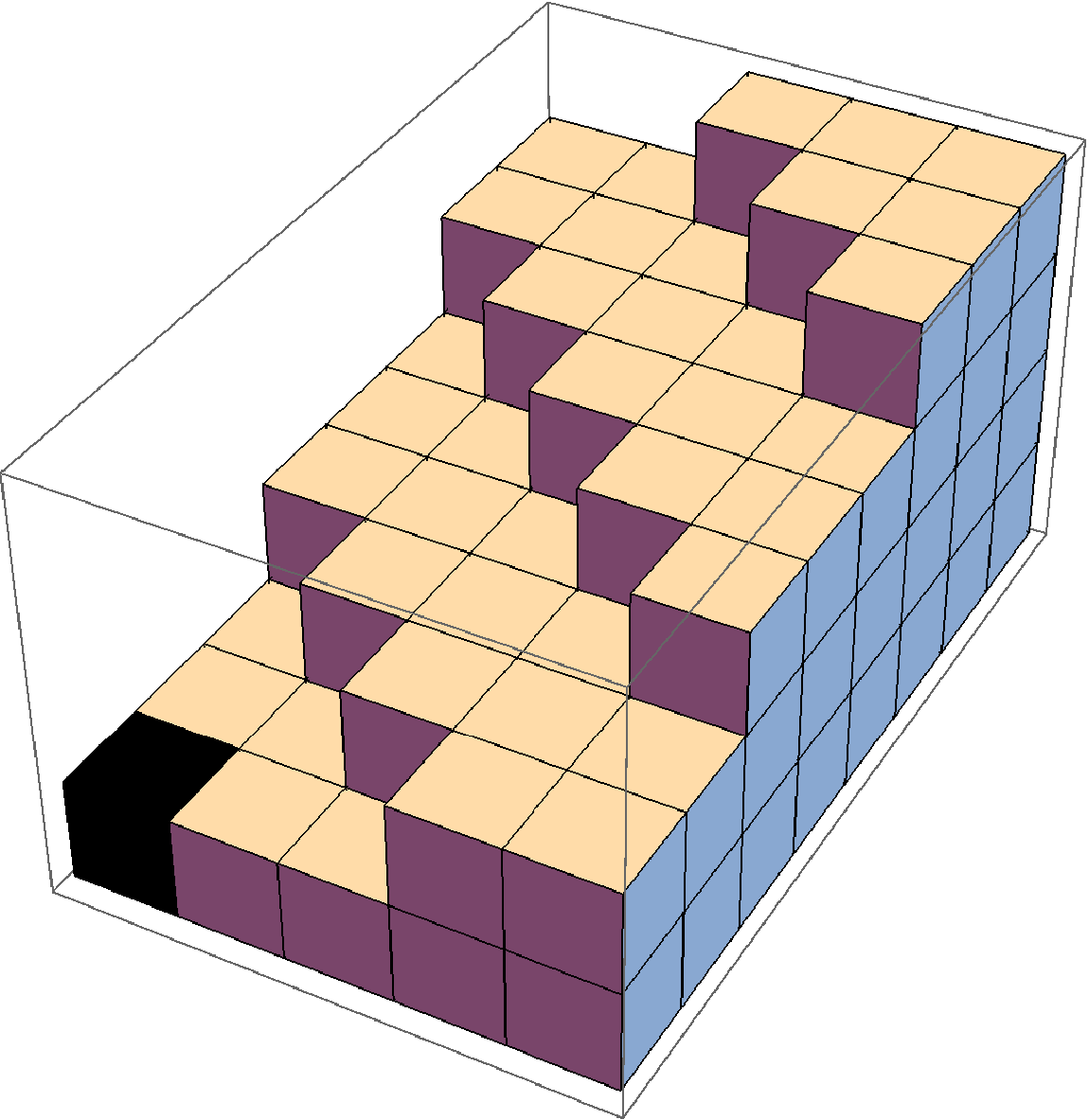}
 \caption{ $CB(F_1,7,3,4)$ }\label{f437}
\end{minipage}	
\end{tabular}
\end{figure}
\end{example}
\begin{example}\label{howtocutchoco} 
 Let $F_2(x,z)$ $= \lfloor \frac{z}{2}\rfloor$. \imp{Positions $CB(F_2, 3, 3, 7), CB(F_2, 5, 2, 7), $ and $CB(F_2, 5, 2 ,5)$, shown in Figures \ref{3D337}, \ref{3D527}, and \ref{3Dchocolate}, are options of $CB(F_2, 5, 3, 7)$, shown in Figure \ref{3Dchocolate22}.}
\imp{Note that there are three different directions of cutting chocolates and in some cases, we reduce two coordinates at the same time.}
\end{example}

\begin{figure}[tb]
\begin{tabular}{cccc}
		\begin{minipage}{.24\textwidth}
		\centering
\includegraphics[height=1.8cm]{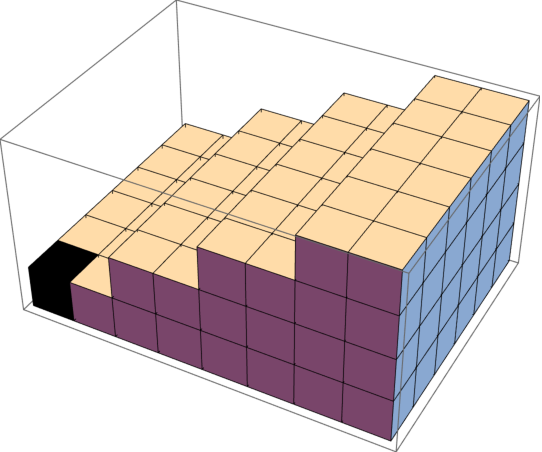}
 \caption{$CB(F_2,5,3,7)$.}\label{3Dchocolate22}
	\end{minipage}
		\begin{minipage}{.24\textwidth}
		\centering
\includegraphics[height=1.8cm]{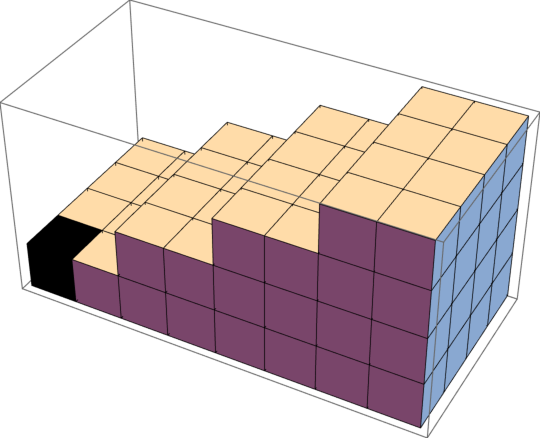}
 \caption{$CB(F_2,3,3,7)$.}\label{3D337}
\end{minipage}
		\begin{minipage}{.24\textwidth}
		\centering
\includegraphics[height=1.8cm]{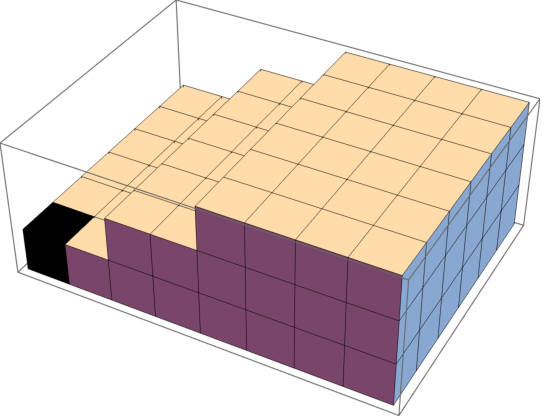}
 \caption{$CB(F_2,5,2,7)$.}\label{3D527}
	\end{minipage}
		\begin{minipage}{.24\textwidth}
		\centering
\includegraphics[height=1.8cm]{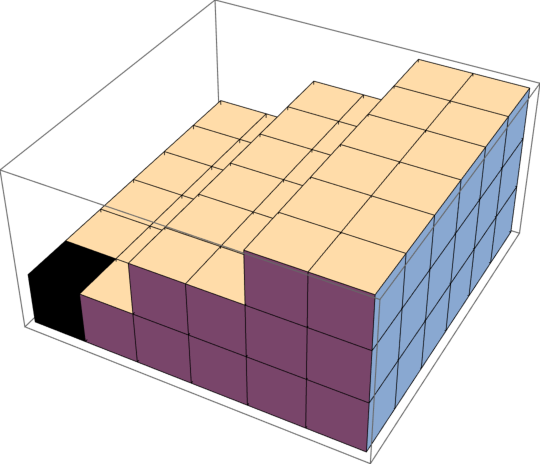}
 \caption{$CB(F_2,5,2,5)$.}\label{3Dchocolate}
\end{minipage}
	\end{tabular}
\end{figure}

\imp{The original two-dimensional chocolate bar shown at left in Figure \ref{two2dchoco} and three-dimensional chocolate bar shown at left in Figure \ref{two3dchoco} were introduced by Robin \cite{robin}. Since the length, height, and width can be reduced independently,  $m \times n \times \ell$ cuboid chocolate bar can be considered as three-pile \textsc{nim} whose piles have $m-1$, $n-1$ and $\ell - 1$ tokens. Since the SG-value of the \textsc{nim} is $(m - 1) \oplus (n - 1) \oplus (\ell - 1)$, the SG-value of  $m \times n \times \ell$ cuboid chocolate bar is $(m - 1) \oplus (n - 1) \oplus (\ell - 1)$.}
Here, the following question arises naturally: 
What are the necessary and sufficient conditions for a three-dimensional chocolate bar to have the SG-value $(m-1) \oplus (n-1) \oplus (\ell-1)$, where $m, n$, and $\ell$ are the length, height, and width of the bar, respectively?
The answer to this question is presented in \cite{integer2021}, which is referred to as Theorem \ref{sufficientcond3d} in this section.

\begin{definition}\label{definitionoffunctionf}
\mbox{}
\begin{itemize}    
\item[(i)] Let $h(z)$ be a single-variable monotonically increasing function.
The function $h$ is said to have the NS-property if $h$ satisfies $h(0)=0$ and the following condition: \\
	Suppose that 
	$\lfloor \frac{z}{2^i}\rfloor = \lfloor \frac{z^{\prime}}{2^i}\rfloor$ 
	for some $z $, $ z^{\prime} \in \mathbb{Z}_{\geq 0}$, and some positive integer $i$.
	Then, 
		$\lfloor \frac{h(z)}{2^{i-1}} \rfloor = \lfloor \frac{h(z^{\prime})}{2^{i-1}}\rfloor.$ 
\item[(ii)] Let $F(x,z)$ be a double-variable monotonically increasing function.
Let $g_n(z) = F(n,z)$ and $h_m(x) =F(x,m)$ for $n,m \in \mathbb{Z}_{\geq 0}$.
The function $F$ is said to have the NS-property if $g_n$ and $h_m$ satisfy the NS-property in $(i)$ of this definition.
\end{itemize}
\end{definition}

\imp{The following proposition shows}
 examples of functions that satisfy the NS-property in $(i)$ of Definition \ref{definitionoffunctionf}. \imp{For the proofs of the proposition, see the appendix.} 

\begin{proposition}\label{lemmaforfloorzbyk} \mbox{}
\begin{enumerate}
\item Let $h(z)=\lfloor \frac{z}{2k}\rfloor$ for some positive integer $k$. Then 
$h(z)$ satisfies NS-property.
\item  Let $h(z)=2^{\lfloor \log_2z \rfloor}-1$ for $z > 0$ and 
$h(0)=0.$
Then, $h(z)$ satisfies NS-property.
\end{enumerate}
\end{proposition}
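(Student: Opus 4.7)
The proof naturally splits along the two parts, and both reduce to manipulations of the floor function. For part (1), the plan is to apply the standard identity $\lfloor \lfloor z/a\rfloor / b\rfloor = \lfloor z/(ab)\rfloor$, valid for positive integers $a,b$ and any $z \in \mathbb{Z}_{\geq 0}$. Setting $a = 2k$ and $b = 2^{i-1}$ gives
\begin{equation*}
\lfloor h(z)/2^{i-1}\rfloor = \lfloor \lfloor z/(2k)\rfloor / 2^{i-1}\rfloor = \lfloor z/(k\cdot 2^i)\rfloor,
\end{equation*}
and applying the identity once more with $a = 2^i$, $b = k$ rewrites this as $\lfloor \lfloor z/2^i\rfloor / k\rfloor$, which manifestly depends only on $\lfloor z/2^i\rfloor$. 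The hypothesis $\lfloor z/2^i\rfloor = \lfloor z'/2^i\rfloor$ therefore yields $\lfloor h(z)/2^{i-1}\rfloor = \lfloor h(z')/2^{i-1}\rfloor$, and $h(0) = 0$ is immediate.

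For part (2), the function $h$ is a step function taking the constant value $2^j - 1$ on each dyadic block $[2^j, 2^{j+1})$ (with $h(0)=0$ handled separately), so the task reduces to locating $z, z'$ inside these blocks. I would split on $q := \lfloor z/2^i\rfloor = \lfloor z'/2^i\rfloor$. If $q = 0$, then $z, z' < 2^i$, which forces $h(z), h(z') \leq 2^{i-1} - 1 < 2^{i-1}$, so both floors are $0$. If $q \geq 1$, set $m = \lfloor \log_2 q\rfloor$ so that $2^m \leq q < 2^{m+1}$; then the containment
\begin{equation*}
[q\cdot 2^i,\, (q+1)\cdot 2^i) \subseteq [2^{m+i},\, 2^{m+i+1})
\end{equation*}
follows from $q \cdot 2^i \geq 2^{m+i}$ and $(q+1)\cdot 2^i \leq 2^{m+1}\cdot 2^i = 2^{m+i+1}$. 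This shows $\lfloor \log_2 z\rfloor = m + i$ is constant on the block, so $h(z) = h(z') = 2^{m+i} - 1$ and the two floors trivially coincide.

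The main obstacle will be the case analysis in part (2), specifically being sure that the dyadic block containment is tight at the right endpoint and that the separately defined value $h(0)=0$ does not violate the boundary behaviour when $q=0$; both are routine but should be stated explicitly. Part (1) is essentially a one-line application of the iterated-floor identity and needs little more.
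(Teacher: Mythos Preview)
Your proof is correct. For part~(2) your argument is essentially identical to the paper's: both split on whether $q=\lfloor z/2^i\rfloor$ is zero, use $z,z'<2^i$ to force both floors to vanish in the first case, and in the second case locate $q$ in a dyadic interval $[2^m,2^{m+1})$ to conclude $\lfloor\log_2 z\rfloor=\lfloor\log_2 z'\rfloor=m+i$.

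For part~(1) the routes differ. You invoke the standard nested-floor identity $\lfloor\lfloor z/a\rfloor/b\rfloor=\lfloor z/(ab)\rfloor$ twice to obtain $\lfloor h(z)/2^{i-1}\rfloor=\lfloor\lfloor z/2^i\rfloor/k\rfloor$, which visibly depends only on $\lfloor z/2^i\rfloor$. The paper instead proceeds by hand: it writes $z=u\cdot 2^i+\text{(lower bits)}$, then $u=kt+s$ with $0\le s<k$, and bounds $z/(2k)-t\,2^{i-1}$ explicitly into $[0,2^{i-1})$ to conclude both floors equal~$t$. Your approach is shorter and more transparent, at the cost of quoting an external identity; the paper's is fully self-contained but amounts to reproving that identity in this particular instance. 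Either is entirely adequate here.
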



\begin{lemma}
\label{lem:hbound}
Let $h(z)$ be a function which has NS-property. For any positive integer $z$, 
we have $h(z) \leq 2^{\lfloor \log_2 z \rfloor} - 1$ .

\end{lemma}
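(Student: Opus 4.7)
The plan is to use the NS-property directly, comparing $z$ with $z' = 0$ at a carefully chosen scale $i$. Set $k = \lfloor \log_2 z \rfloor$, so that $2^k \le z < 2^{k+1}$. Then both $z$ and $0$ satisfy $\lfloor z / 2^{k+1} \rfloor = 0 = \lfloor 0 / 2^{k+1} \rfloor$, which puts them in the same ``bucket'' at scale $i = k+1$.

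Applying the NS-property with $z' = 0$ and $i = k+1$ yields
\[
\Bigl\lfloor \frac{h(z)}{2^{k}} \Bigr\rfloor = \Bigl\lfloor \frac{h(0)}{2^{k}} \Bigr\rfloor.
\]
Since $h(0) = 0$ by definition of the NS-property, the right-hand side is $0$, hence $h(z) < 2^k$, which is exactly $h(z) \le 2^{\lfloor \log_2 z \rfloor} - 1$.

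There is essentially no obstacle here; the only subtlety is choosing the correct value of $i$. Picking $i = k$ instead would give the weaker (and potentially vacuous when $k = 0$) conclusion $\lfloor h(z) / 2^{k-1} \rfloor = 0$ only when $z < 2^k$, which is not the regime we need. Choosing $i = k+1$ is the unique choice that simultaneously (a) puts $z$ and $0$ in the same $2^i$-bucket and (b) yields a bound of the form $h(z) < 2^k$ on the output side, matching the target inequality.
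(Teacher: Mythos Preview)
Your proof is correct and essentially identical to the paper's own argument: both set $k = \lfloor \log_2 z \rfloor$, observe that $\lfloor z/2^{k+1}\rfloor = \lfloor 0/2^{k+1}\rfloor = 0$, and apply the NS-property with $z' = 0$ at scale $k+1$ together with $h(0)=0$ to conclude $h(z) < 2^k$. The only difference is cosmetic (the paper names $\lfloor \log_2 z\rfloor$ by $i$ rather than $k$) and your added discussion of why $k+1$ is the right choice, which the paper omits.
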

\begin{proof}

    Let $i = \lfloor \log_2 z \rfloor$. We have $\lfloor \frac{0}{2^{i+1}} \rfloor = \lfloor \frac{z}{2^{i+1}}  \rfloor = 0$. Therefore, from the definition of NS-property, $\lfloor  \frac{h(z)}{2^{i}} \rfloor = \lfloor \frac{h(0)}{2^{i}}\rfloor = 0$, which means $h(z) \leq 2^{i} - 1$.
    \qed
\end{proof}

\begin{theorem}\label{sufficientcond3d}
Let $F(x,z)$ be a monotonically increasing function.
$F(x,z)$ has  NS-property 
if and only if the SG-value of chocolate bar $CB(F,x,y,z)$ is $x \oplus y \oplus z$.
\end{theorem}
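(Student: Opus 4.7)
The plan is to prove both directions by strong induction on $x+y+z$, exploiting Lemma \ref{lem:hbound} and the binary-digit formulation of the NS-property. For the \emph{if} direction, assuming NS-property, the base case $CB(F,0,0,0)$ has SG-value $0$, and in the inductive step the claim $\mathcal{G}(CB(F,x,y,z))=x\oplus y\oplus z$ reduces by the mex definition and the induction hypothesis to two arithmetic statements about options: (a) no option of $CB(F,x,y,z)$ has coordinates XORing to $x\oplus y\oplus z$, and (b) every $t<x\oplus y\oplus z$ is realized as the XOR of some option's coordinates. Moves that only reduce one coordinate ($y$ to some $v<y$) reduce to standard Nim reasoning; the substantive work concerns the coupled moves that reduce $x$ to $u<x$ and may simultaneously drop $y$ to $\min(F(u,z),y)$, and symmetrically for $z$.

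For part (a) on a coupled move with $y>F(u,z)$, I would use Lemma \ref{lem:hbound} to bound $F(u,z)\leq 2^{\lfloor\log_2 z\rfloor}-1$, so $F(u,z)$ contributes no bits at or above position $\lfloor\log_2 z\rfloor$; then the NS-property rigidly determines the high bits of $F(u,z)$ in terms of those of $z$, pinning $u\oplus F(u,z)\oplus z$ at the bits where $u$ and $x$ disagree and preventing equality with $x\oplus y\oplus z$ under the hypothesis $y>F(u,z)$. Part (b) is more delicate: given a target $t<x\oplus y\oplus z$, the standard Nim strategy picks a coordinate to reduce; if that coordinate is $y$ the move succeeds immediately, and if it is $x$ the natural candidate $u=t\oplus y\oplus z$ works directly when $F(u,z)\geq y$, but otherwise the coupling alters the XOR and I would use the NS-property to adjust $u$ so that $u\oplus F(u,z)$ produces the correct value $t\oplus z$, or route the adjustment through a different coordinate.

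For the \emph{only if} direction I would argue contrapositively: given a failure of NS-property, namely integers $x,z,z',i$ with $\lfloor z/2^i\rfloor=\lfloor z'/2^i\rfloor$ but $\lfloor F(x,z)/2^{i-1}\rfloor\neq\lfloor F(x,z')/2^{i-1}\rfloor$, I would use these witnesses to construct a specific bar $CB(F,x,y,z)$ whose move sending $z$ to $z'$, together with the accompanying coupling on $y$, lands at a position whose XOR unexpectedly matches $x\oplus y\oplus z$, breaking the SG formula and forcing $\mathcal{G}(CB(F,x,y,z))\neq x\oplus y\oplus z$.

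The main obstacle is the bit-level analysis in part (b) of the forward direction: whenever the direct Nim reduction triggers an undesired coupling, one must show that the NS-property allows a slightly adjusted reduction value that still achieves the target XOR $t$. This requires carefully tracking which digits of $F(u,z)$ are pinned by NS-property (the high bits, above the position $\lfloor\log_2 z\rfloor$ determined by $z$) and which remain free, and meshing those constraints with XOR cancellation patterns.
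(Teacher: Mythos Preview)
The paper does not prove Theorem \ref{sufficientcond3d} at all; immediately after the statement it says ``This result is shown in \cite{integer2021}'' and moves on. So there is no in-paper argument to compare your proposal against; your sketch is attempting work that the present paper deliberately outsources.

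As to the sketch itself, the overall architecture (strong induction on $x+y+z$, verifying the two mex conditions, and handling coupled moves via the bit structure imposed by the NS-property) is the standard one and is almost certainly what \cite{integer2021} does. But your treatment of part (a) conflates the two halves of the NS-property. You bound $F(u,z)$ via Lemma \ref{lem:hbound} applied to $g_u(\cdot)=F(u,\cdot)$, which controls bits relative to $\lfloor\log_2 z\rfloor$, and then argue about ``the bits where $u$ and $x$ disagree''; to get traction on \emph{those} bits you need the other half, namely the NS-property of $h_z(\cdot)=F(\cdot,z)$, which from $\lfloor u/2^{j+1}\rfloor=\lfloor x/2^{j+1}\rfloor$ (with $j$ the top bit where $u,x$ differ) yields $\lfloor F(u,z)/2^{j}\rfloor=\lfloor F(x,z)/2^{j}\rfloor$ and hence pins the high bits of $F(u,z)$ to those of $F(x,z)$, not of $z$. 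Without making this switch explicit, the claimed inequality $u\oplus F(u,z)\oplus z\neq x\oplus y\oplus z$ is not actually established. Part (b) you yourself flag as the main obstacle, and indeed the ``adjust $u$'' step is where all the content lives; what you have written is a description of the difficulty rather than a resolution of it. The contrapositive direction is likewise only outlined: producing a concrete bar from an NS-failure witness so that a single cut lands on a position with the same XOR requires choosing $y$ carefully (typically $y=F(x,z)$ and then cutting $z$ to $z'$), and you have not specified that choice.
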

\imp{This result is shown in \cite{integer2021}.}
By Theorem \ref{sufficientcond3d}, the formula for SG-values in \textsc{chocolate games} $CB(F,x,y,z)$ with NS-property is the same as that of three-pile \textsc{nim}, but as shown in Section \ref{sec:choco}, some \textsc{chocolate games} with NS-property also have a formula for 
$\mathcal{P}$-positions even if a pass-move is allowed.
Therefore, the pass-move was found to have a minimal impact on these \textsc{chocolate games}. This is remarkable since three-pile \textsc{nim} does not have any known formula for $\mathcal{P}$-positions when a pass-move is allowed.

\begin{corollary}
    Let $h(z)$ be a monotonically increasing function. Assume that function $h(z)$ has  NS-property.
    Then, consider two-dimensional \textsc{chocolate game} $CB_2(h, y, z) = CB(F, 0, y, z)$, where $F(x, z) = h(z)$ and $y$ is restricted by $F$ as $y \leq F(x, z)$.
    The SG-value of the chocolate bar $CB_2(h, y, z)$ is $y \oplus z$. 
\end{corollary}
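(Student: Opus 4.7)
The plan is to prove the corollary by strong induction on $y + z$, mimicking the inductive argument that establishes Theorem \ref{sufficientcond3d} in the three-dimensional setting. Theorem \ref{sufficientcond3d} cannot be used as a black box here: although $CB_2(h,y,z) = CB(F,0,y,z)$ with $F(x,z) = h(z)$ looks like a degenerate 3D bar, $F$ fails the NS-property because the slice $h_m(x) = h(m)$ is a nonzero constant in $x$. So the corollary must be proved in its own right, but the same structure works.

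After reducing to the inductive step, the options of $CB_2(h,y,z)$ fall into two families with inductively known SG-values: the Type (a) options $(v,z)$ with $v<y$, of SG-value $v \oplus z$, and the Type (b) options $(\min(y,h(w)),w)$ with $w<z$, of SG-value $\min(y,h(w)) \oplus w$. We must verify (I) that no option has SG-value $y \oplus z$ and (II) that every $s \in [0, y \oplus z)$ is the SG-value of some option.

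For (I), the only nontrivial case is a Type (b) move with $h(w) < y$. Let $j$ be the highest bit at which $w$ and $z$ differ; the NS-property applied at $i = j+1$ forces bits $\geq j$ of $h(w)$ and $h(z)$ to agree, and then $h(w) < y \leq h(z)$ pins the bits $\geq j$ of $y$ to equal those of $h(w)$. Consequently $(y \oplus z) \oplus (h(w) \oplus w)$ has its $j$-th bit equal to $1$ and no higher bit, hence is nonzero.

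For (II), set $k = \lfloor \log_2 z \rfloor$; by Lemma \ref{lem:hbound} one has $y < 2^k \leq z < 2^{k+1}$ and $y \oplus z \in [2^k, 2^{k+1})$. The technical heart is that $\phi(w) := \min(y,h(w)) \oplus w$ restricts to a bijection on each of the blocks $[0,2^k)$ and $[2^k,2^{k+1})$; the image stays in the block by bit-length considerations, and injectivity reduces to cases on whether $h(w_i) \geq y$ or $< y$, with the mixed case eliminated by the same NS-property bit-counting trick as in Part (I). Given the bijection: for $s < 2^k$, pick $w \in [0,2^k)$ with $\phi(w) = s$ as a Type (b) move; for $s \in [2^k, y \oplus z)$, apply the standard two-pile Nim argument to $(y, z-2^k)$, which delivers either a valid Type (a) move, or in the opposite branch the unique $w^* \in [2^k, 2^{k+1})$ with $\phi(w^*) = s$ from bijectivity, and $w^* < z$ follows from the Nim branch inequality $(s - 2^k) \oplus y < z - 2^k$ by comparing against the alternative $w^* = y \oplus s$ that would arise if $h(w^*) \geq y$. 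The main obstacle is precisely this last case: the naive Nim move $w = s \oplus y$ may land in the sub-branch $h(w) < y$ and then $\phi(w) \neq s$, so invoking bijectivity of $\phi$ rather than attempting a direct construction is the essential step.
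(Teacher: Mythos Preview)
Your proof is correct and takes a genuinely different route from the paper. The paper's argument is two lines: it asserts that $F(x,z)=h(z)$ inherits the NS-property from $h$ and then invokes Theorem~\ref{sufficientcond3d} to get $\mathcal{G}(CB(F,0,y,z))=0\oplus y\oplus z=y\oplus z$. Your objection to that first step is well-founded: by Definition~\ref{definitionoffunctionf}(ii) the slice $h_m(x)=F(x,m)=h(m)$ must itself satisfy part~(i), in particular $h_m(0)=0$; but $h_m(0)=h(m)$, which is nonzero as soon as $h$ is not identically zero. So the paper's black-box reduction is not literally justified by the stated definitions (it is easily repaired, e.g.\ by quoting a two-dimensional analogue of Theorem~\ref{sufficientcond3d}, or by observing that the $h_m$-condition is never invoked when $x$ is frozen at~$0$), whereas your direct induction sidesteps the issue entirely. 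What the paper's approach buys is brevity; what yours buys is self-containment and an explicit view of \emph{where} the NS-property is actually used --- namely in the bit-alignment argument of Part~(I) and in the block-bijectivity of $\phi$ in Part~(II). The one terse step in your outline is the final inequality $w^*<z$; spelled out: if $w^*\ge z$ then monotonicity gives $h(w^*)\ge h(z)\ge y$, so $\phi(w^*)=y\oplus w^*=s$ forces $w^*=y\oplus s$, which is $<z$ by the Nim-branch hypothesis --- a contradiction. With that unpacked, the argument is complete.
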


\begin{proof}
    When $h(z)$ has NS-property, $F(x, z) = h(z)$ also has NS-property.
    Therefore, the SG-value of position $CB_2(h, y, z)$ is the same as $CB(F, 0, y, z)$, which is $0 \oplus y \oplus z = y \oplus z$. \qed
\end{proof}

Table \ref{tab:NS} shows the SG-value of \textsc{chocolate game} $CB_2(h, y, z)$ for small bars, where $h$ has NS-property. Note that from Lemma \ref{lem:hbound}, whatever the function $h$ is, we do not need to consider the case $y > 2^{\lfloor \log_2 z \rfloor} -1$. Therefore, the shape of the  chocolate bar of a position where $z \leq 15$ must be included in this table.

\begin{table}[tb]
    \caption{SG-values of \textsc{chocolate game} $CB_2(h, y, z)$}
    \label{tab:NS}

    \centering
    \begin{tabular}{c|cccccccccccccccc}
        $y \backslash z$ & 0 & 1 & 2 & 3 & 4 & 5 & 6 & 7 & 8 & 9 & 10 & 11 & 12 & 13 & 14 & 15 \\ \hline
        0 & 0 & 1 & 2 & 3 & 4 & 5 & 6 & 7 & 8 & 9 & 10 & 11 & 12 & 13 & 14 & 15 \\
        1 &   &   & 3 & 2 & 5 & 4 & 7 & 6 & 9 & 8 & 11 & 10 & 13 & 12 & 15 & 14 \\
        2 &   &   &   &   & 6 & 7 & 4 & 5 & 10 & 11 & 8 & 9 & 14 & 15 & 13 & 12\\
        3 &   &   &   &   & 7 & 6 & 5 & 4 & 11 & 10 & 9 & 8 & 15 & 14 & 13 & 12 \\
        4 &   &   &   &   &   &   &   &   & 12 & 13 & 14 & 15 & 8 & 9 & 10 & 11 \\
        5 &   &   &   &   &   &   &   &   & 13 & 12 & 15 & 14 & 9 & 8 & 11 & 10 \\
        6 &   &   &   &   &   &   &   &   & 14 & 15 & 12 & 13 & 10 & 11 & 8 & 9 \\
        7 &  &  &   &   &   &   &   &   &  15 & 14  & 13  & 12  & 11 & 10 & 9 & 8 \\
        
    \end{tabular}
\end{table}

\begin{lemma}
\label{lem:16}
    Assume that $h(z)$ has NS-property.
    For any $z \geq 16,$  we have $ y \oplus z \geq 16$ if $y \leq h(z)$.
\end{lemma}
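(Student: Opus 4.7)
The plan is to read off the top bit of $z$ and show that $y$ cannot disturb it. First I would invoke Lemma \ref{lem:hbound} to obtain the bound $h(z) \leq 2^{\lfloor \log_2 z \rfloor} - 1$, which passes through the hypothesis $y \leq h(z)$ to give $y \leq 2^{\lfloor \log_2 z \rfloor} - 1$. Set $i = \lfloor \log_2 z \rfloor$; since $z \geq 16 = 2^4$, we have $i \geq 4$.

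Next I would argue bit-by-bit. By definition of $i$, the inequality $2^i \leq z < 2^{i+1}$ holds, so bit $i$ of $z$ (in binary) is $1$. On the other hand, $y < 2^i$ forces bit $i$ of $y$ to be $0$, and in fact every bit of $y$ at position $\geq i$ is $0$. Hence in the XOR $y \oplus z$, bit $i$ is $1 \oplus 0 = 1$, yielding $y \oplus z \geq 2^i \geq 2^4 = 16$.

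The argument is essentially one line once Lemma \ref{lem:hbound} is in hand, so there is no real obstacle; the only content is recognising that the NS-property forces $h(z)$ to live strictly below the leading power of two of $z$, so that the leading bit of $z$ survives the XOR. Because of this, I do not expect to need any case analysis or induction in the write-up.
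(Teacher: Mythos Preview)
Your proposal is correct and follows essentially the same approach as the paper: invoke Lemma~\ref{lem:hbound} to get $y \leq h(z) < 2^i$ with $i = \lfloor \log_2 z \rfloor \geq 4$, and observe that the bit at position $i$ of $z$ survives in $y \oplus z$. The paper's write-up is terser but the content is identical.
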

\begin{proof}
    Assume that $h(z)$ has NS-property.
    From Lemma \ref{lem:hbound}, for any integer $z$, there is an integer $i$ such that $2^i \leq z$ and $h(z) < 2^i$. 
Thus, when $z \geq 16$, we have $i \geq 4$ and $y \oplus z  \geq 2^4=16$. \qed
    
\end{proof}
\begin{lemma}
\label{lem:leq8}
\imp{Assume that $h$ has NS-property. Let $y$ and $z$ be nonnegative integers with $y \le h(z)$. Then, the following holds.}

\begin{itemize}
\item 
$CB_2(h, y, z)$ has SG-value $0$ if and only if $y = z = 0$.

\item 
$CB_2(h, y, z)$ has SG-value $1$ if and only if $y = 0$ and $z = 1$.

\item 
$CB_2(h, y, z)$ has SG-value $2$ if and only if $(y, z) \in \{(0, 2), (1, 3)\}$.

\item 
$CB_2(h, y, z)$ has SG-value $3$ if and only if $(y, z) \in \{(0, 3), (1, 2)\}$.

\item $CB_2(h, y, z)$ has SG-value $4$ if and only if $(y, z) \in \{(0, 4), (1, 5), (2, 6), (3, 7)\}$.

\item $CB_2(h, y, z)$ has SG-value $5$ if and only if $(y, z) \in \{(0, 5), (1, 4), (2, 7), (3, 6)\}$.

\item $CB_2(h, y, z)$ has SG-value $6$ if and only if $(y, z) \in \{(0, 6), (1, 7), (2, 4), (3, 5)\}$.

\item $CB_2(h, y, z)$ has SG-value $7$ if and only if $(y, z) \in \{(0, 7), (1, 6), (2, 5), (3, 4)\}$.

\item $CB_2(h, y, z)$ has SG-value $8$ if and only if $(y, z) \in \{(0, 8), (1, 9), (2, 10), (3, 11), \allowbreak (4,12), \allowbreak (5, 13), (6,14), (7,15)\}$.
\end{itemize}

\end{lemma}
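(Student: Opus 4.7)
The plan is to invoke the Corollary immediately preceding this lemma, which gives $\mathcal{G}(CB_2(h, y, z)) = y \oplus z$. This reduces the statement to a purely arithmetic claim: for each $k \in \{0, 1, \ldots, 8\}$, the listed pairs are exactly those $(y, z)$ with $y \leq h(z)$ and $y \oplus z = k$. So the task becomes enumerating all such pairs in a controlled finite region.

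The first step is to bound $z$. Since every SG-value under consideration satisfies $k \leq 8 < 16$, Lemma \ref{lem:16} forces $z \leq 15$. Then Lemma \ref{lem:hbound} supplies $y \leq h(z) \leq 2^{\lfloor \log_2 z \rfloor} - 1$, which refines the admissible region: $y = 0$ for $z \in \{0, 1\}$; $y \in \{0, 1\}$ for $z \in \{2, 3\}$; $y \in \{0, 1, 2, 3\}$ for $z \in \{4, \ldots, 7\}$; and $y \in \{0, \ldots, 7\}$ for $z \in \{8, \ldots, 15\}$. These are precisely the cells displayed in Table \ref{tab:NS}.

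The second step is then, for each $k \in \{0, 1, \ldots, 8\}$, to collect the pairs $(y, z)$ in that region with $y \oplus z = k$. This is a routine reading of the table (or equivalently a short case analysis on the binary expansions), and it yields exactly the lists in the statement.

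The only subtlety is confirming completeness --- that no admissible pair has been overlooked, especially for $k = 8$ where the list is longer. This is automatic: for $z \geq 1$ the bound $y \leq 2^{\lfloor \log_2 z \rfloor} - 1 < z$ eliminates all pairs with $y \geq z$, while for $k \leq 7$ any pair $(y, z)$ with $y \oplus z = k$ and $y < z$ must have $z \leq 7$, and for $k = 8$ must have $z \in \{8, \ldots, 15\}$. Thus the enumeration from the restricted region of Table \ref{tab:NS} is exhaustive, completing the proof.
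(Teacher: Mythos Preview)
Your proof is correct and follows essentially the same approach as the paper: invoke Lemma~\ref{lem:16} to reduce to $z \le 15$, then read off Table~\ref{tab:NS} (which is built from the Corollary's formula $\mathcal{G}(CB_2(h,y,z)) = y \oplus z$ together with the bound of Lemma~\ref{lem:hbound}). The paper's proof is a two-line version of yours; you have simply made explicit the ingredients that the paper leaves implicit in its reference to the table.
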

\begin{proof}
    From Lemma \ref{lem:16}, we only need to consider the case $z \leq 15.$ Therefore, from Table \ref{tab:NS}, we can confirm that the statement is true. \qed
\end{proof}

\section{One-move games with a pass}
\label{sec:pass}
In this section, we show that the SG-value of a disjunctive compound of games with a pass can be calculated by using SG-value of \textsc{nim} with a pass if every component is a game that can arrive at a terminal position in one move.

\begin{definition}
\label{def:dispas}
Let $g_1, \ldots, g_n$ be positions in impartial games.
{\em One-pass compound} of $g_1, \ldots, g_n$ is denoted by $C_{(+, \mathrm{pass})}(g_1, \ldots, g_n)$, and the set of its options is  
\begin{eqnarray*}
\bigcup_{i=1}^n \{(C_{(+, \mathrm{pass})}(g'_1, \ldots, g'_n)) \mid g'_i \in {\rm move}(g_i) \text{ and } g'_j = g_j \text{ for any } j \neq i\} \cup \{C_+(g_1, \ldots, g_n)\}, 
\end{eqnarray*}
when  at least one of  $g_i$ is not a terminal position, and if every $g_i$ is a terminal position, then 
$\mathrm{move}(C_{(+, \mathrm{pass})}(g_1, \ldots, g_n)) =\emptyset.$

\end{definition}

That is, one-pass compound is almost the same as disjunctive compound, but the players can make a pass-move except in a terminal position, and once the pass-move is used, neither player may use it again. Note that when the original position is a $\mathcal{P}$-position, the position changes to an $\mathcal{N}$-position when a pass-move is added since the player can use the pass. However, when the original position is $\mathcal{N}$-position, there are two cases that it changes to a $\mathcal{P}$-position or it remains to be an $\mathcal{N}$-position. Therefore, even if the winner of the original position is determined, it is still hard to determine the winner if a pass-move is added.

\begin{definition}
    A position $g$ is \emph{one-move game} if its follower has SG-value $0$ if and only if the follower is a terminal position. 
\end{definition}

Note that if a \imp{position} is SG-decreasing, then it is also a one-move game, \imp{but} the converse may not hold. For example, two-dimensional \textsc{chocolate game} where function $h$ satisfies NS-property is a one-move game, \imp{but} like the move $CB_2(h, 1, 3)$ to $CB_2(h, 1, 2)$, there are moves that increase SG-values \imp{(see Table \ref{tab:NS})}.
We also note that if a one-move game is a single component, then the game will end in at most one move since if it does not have SG-value $0$, then it must have an option to a position whose SG-value is $0$, that is, a terminal position. However, if two or more one-move games are combined by disjunctive compound, then it is not guaranteed to end in a certain number of \imp{moves}, since moving \imp{a component} to \imp{a} terminal position may not be a winning strategy.

\begin{theorem}
\label{thm:pass}
Let $g_1, \ldots, g_n$ be one-move games.
Then, \imp{we have}
    \begin{eqnarray*}        
\mathcal{G}(C_{(+, \mathrm{pass})}(g_1, \ldots, g_n)) = \mathcal{G}(C^\mathrm{nim}_{(+, \mathrm{pass})}(\mathcal{G}(g_1), \ldots, \mathcal{G}(g_n))).
    \end{eqnarray*}


%

\end{theorem}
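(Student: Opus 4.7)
The plan is to induct on the game tree of $C_{(+, \mathrm{pass})}(g_1, \ldots, g_n)$ (well-founded because games are assumed finite) and compare the mex computation on the two sides. Write $s_i := \mathcal{G}(g_i)$ and, abbreviating, $P(m_1, \ldots, m_n) := \mathcal{G}(C^\mathrm{nim}_{(+, \mathrm{pass})}(m_1, \ldots, m_n))$. By Definition \ref{def:dispas}, the options of $C_{(+, \mathrm{pass})}(g_1, \ldots, g_n)$ are the regular-move options $C_{(+, \mathrm{pass})}(g_1, \ldots, g_i', \ldots, g_n)$ for $i \in [n]$ and $g_i' \in \mathrm{move}(g_i)$, together with the pass option $C_+(g_1, \ldots, g_n)$. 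The pass option has SG-value $s_1 \oplus \cdots \oplus s_n$ directly by Theorem \ref{thm:disjunctive}, and each regular-move option, by the inductive hypothesis (the one-move property is clearly inherited under $\mathrm{move}$, since followers of $g_i'$ are also followers of $g_i$), has SG-value $P(s_1, \ldots, \mathcal{G}(g_i'), \ldots, s_n)$. The proof thus reduces to showing that the mex of all these option SG-values equals $P(s_1, \ldots, s_n)$.

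Next I would unpack the one-move hypothesis in its mex form. From $s_i = \mathrm{mex}\{\mathcal{G}(g_i') : g_i' \in \mathrm{move}(g_i)\}$, every value $s_i' \in \{0, 1, \ldots, s_i - 1\}$ is realized as $\mathcal{G}(g_i')$ for some option $g_i'$ of $g_i$, while $s_i$ itself is not. Consequently, the set of SG-values of options of $C_{(+, \mathrm{pass})}(g_1, \ldots, g_n)$ contains
\[
B := \{P(s_1, \ldots, s_i', \ldots, s_n) : i \in [n],\ 0 \leq s_i' < s_i\} \cup \{s_1 \oplus \cdots \oplus s_n\},
\]
whose mex is, by the definition of $P$, exactly $P(s_1, \ldots, s_n)$. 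The only additional option SG-values (the ``extras'') come from options $g_i'$ with $t := \mathcal{G}(g_i') > s_i$; such extras really can exist, since $g_i$ is merely one-move, not SG-decreasing.

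The crux of the argument, and the step I expect to be the main obstacle, is the claim that the extras never disturb the mex, namely: \emph{for every $t > s_i$, $P(s_1, \ldots, t, \ldots, s_n) \neq P(s_1, \ldots, s_n)$}. Once stated, its proof is a one-liner: in $C^\mathrm{nim}_{(+, \mathrm{pass})}(s_1, \ldots, t, \ldots, s_n)$ the move reducing the $i$-th pile from $t$ down to $s_i$ is legal (because $s_i < t$), so $P(s_1, \ldots, s_n)$ appears among the SG-values of its options, and hence by the mex definition $P(s_1, \ldots, t, \ldots, s_n)$ cannot equal it.

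Given the claim, adjoining the extras to $B$ never makes the value $P(s_1, \ldots, s_n)$ present, so the mex of the full option set remains $P(s_1, \ldots, s_n)$, closing the inductive step. The base case (all $g_i$ terminal) is immediate, since both sides then have empty option sets and SG-value $0$. The whole subtlety is concentrated in the key claim, which is precisely where the NIM option structure---``one may reduce any pile to any smaller value''---compensates for the looser structure obtained by weakening SG-decreasing to one-move games.
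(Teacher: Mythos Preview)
Your argument is structurally the same as the paper's, but you have mislocated where the one-move hypothesis does its essential work. What you call ``unpacking the one-move hypothesis in its mex form'' (that each $s_i' < s_i$ is realized by some option of $g_i$) is just the definition of the SG-value as a mex and holds for \emph{any} impartial game. Likewise, your ``key claim'' that $P(s_1, \ldots, t, \ldots, s_n) \neq P(s_1, \ldots, s_n)$ for $t > s_i$ uses only the nim option structure, not the one-move property.

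The genuine use of the hypothesis---beyond the inheritance you correctly note for the inductive hypothesis---is hidden in your assertion that $\mathrm{mex}(B) = P(s_1, \ldots, s_n)$ ``by the definition of $P$''. For $B$ to be exactly the set of SG-values of the options of $C^{\mathrm{nim}}_{(+,\mathrm{pass})}(s_1, \ldots, s_n)$, that nim-with-pass position must itself be non-terminal, i.e.\ some $s_i > 0$. In the inductive step some $g_i$ is non-terminal; the one-move property says precisely that such a $g_i$ has $s_i = \mathcal{G}(g_i) > 0$. Without it one could have all $s_i = 0$ while the compound is non-terminal, and then $B = \{0\}$ has mex $1$ whereas $P(0,\ldots,0) = 0$. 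The paper makes exactly this point explicit (``since every $g_i$ is a one-move game and at least one of $g_i$ is not a terminal position, at least one of $\mathcal{G}(g_i)$ is a positive integer''). Once you insert this one observation, your proof is complete and essentially identical to the paper's.
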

\begin{proof}

    We prove this by induction on the sum of the heights of game trees of $g_1, \ldots, g_n$. When every $g_i$ is terminal position, 
    \begin{eqnarray*}
\mathcal{G}(C_{(+, \mathrm{pass})}(g_1, \ldots, g_n)) = \mathcal{G}(C^\mathrm{nim}_{(+, \mathrm{pass})}(\mathcal{G}(g_1), \ldots, \mathcal{G}(g_n))) = 0
\end{eqnarray*}
    Assume that at least one of $g_i$ is not  a terminal position. Then, from the induction hypothesis,
    \begin{eqnarray*}
    &&\mathcal{G}(C_{(+, \mathrm{pass})}(g_1, \ldots, g_n)) \\ &=& \mathrm{mex}
    (\bigcup_{i=1}^n\{\mathcal{G}(C_{(+, \mathrm{pass})}(g'_1, \ldots, g'_n)) \mid g'_i \in {\rm move}(g_i) \text{ and } g'_j = g_j \text{ for any } j \neq i\}  \cup  \{\mathcal{G}(C_{+}(g_1, \ldots, g_n)) \}) \\ 
    &=& \mathrm{mex}(\bigcup_{i=1}^n \mathcal{G}(C^\mathrm{nim}_{(+, \mathrm{pass})}(\mathcal{G}(g'_1), \ldots, \mathcal{G}(g'_n)))\mid g'_i \in {\rm move}(g_i) \text{ and } g'_j = g_j \text{ for any } j \neq i\}     \cup  \{\mathcal{G}(g_1) \oplus \cdots \oplus \mathcal{G}(g_n)\}). \\
    \end{eqnarray*}
    \imp{Let $S_1 = \bigcup_{i=1}^n \mathcal{G}(C^\mathrm{nim}_{(+, \mathrm{pass})}(\mathcal{G}(g'_1), \ldots, \mathcal{G}(g'_n)))\mid g'_i \in {\rm move}(g_i) \text{ and } g'_j = g_j \text{ for any } j \neq i\}$ and $S_2 = \{\mathcal{G}(g_1) \oplus \cdots \oplus \mathcal{G}(g_n)\}$. In order to prove $\mathcal{G}(C^\mathrm{nim}_{(+, \mathrm{pass})}(\mathcal{G}(g_1), \ldots, \mathcal{G}(g_n))) = {\rm mex}(S_1 \cup S_2),$ we show that $\mathcal{G}(C^\mathrm{nim}_{(+, \mathrm{pass})}(\mathcal{G}(g_1), \ldots, \mathcal{G}(g_n))) \not \in S_1 \cup S_2$ and for any $\ell < \mathcal{G}(C^\mathrm{nim}_{(+, \mathrm{pass})}(\mathcal{G}(g_1), \ldots, \mathcal{G}(g_n))), \ell \in S_1 \cup S_2$. }
    Here, 
    \begin{eqnarray*}
    &&\mathcal{G}(C^\mathrm{nim}_{(+, \mathrm{pass})}(\mathcal{G}(g_1), \ldots, \mathcal{G}(g_n))) \not \in S_1
    \end{eqnarray*}
    because if $m_i \neq m'_i$, $$\mathcal{G}(C^\mathrm{nim}_{(+, \mathrm{pass})}(m_1, \ldots, m_{i-1}, m_i, m_{i+1}, \ldots, m_n)) \neq \mathcal{G}(C^\mathrm{nim}_{(+, \mathrm{pass})}(m_1, \ldots, m_{i-1}, m'_i, m_{i+1}, \ldots, m_n)).$$  Furthermore, since every $g_i$ is a one-move game and at least one of $g_i$ is not a terminal position, at least one of $\mathcal{G}(g_i)$ is a positive integer and therefore, $$\mathcal{G}(C^\mathrm{nim}_{(+, \mathrm{pass})}(\mathcal{G}(g_1),\ldots, \mathcal{G}(g_n)) \neq \mathcal{G}(g_1)\oplus \ldots \oplus  \mathcal{G}(g_n) = \mathcal{G}(C_+^\mathrm{nim}(\mathcal{G}(g_1), \ldots, \mathcal{G}(g_n))).$$

   Next, assume that $\ell < \mathcal{G}(C^\mathrm{nim}_{(+, \mathrm{pass})}(\mathcal{G}(g_1), \ldots, \mathcal{G}(g_n)))$. Since $$\mathcal{G}(C^\mathrm{nim}_{(+, \mathrm{pass})}(\mathcal{G}(g_1), \ldots, \mathcal{G}(g_n))) = \mathrm{mex}(\{\mathcal{G}(G) \mid G \in \mathrm{move}(C^\mathrm{nim}_{(+, \mathrm{pass})}(\mathcal{G}(g_1), \ldots, \mathcal{G}(g_n)))\}),$$ there is an integer $i$ such that $\ell = \mathcal{G}(C^\mathrm{nim}_{(+, \mathrm{pass})}(\mathcal{G}(g_1),\ldots, \mathcal{G}(g_{i-1}), k, \mathcal{G}(g_{i+1}),\ldots, \mathcal{G}(g_n))),$ where $k < \mathcal{G}(g_i)$, or $\ell = \mathcal{G}(C^\mathrm{nim}_+(\mathcal{G}(g_1),  \ldots, \mathcal{G}(g_n))) $. Then, for the former case, 
   $\ell \in S_1$
   and for the latter case, $\ell  = \mathcal{G}(g_1) \oplus \cdots \oplus \mathcal{G}(g_n).$ Thus, $\ell \in S_1 \cup S_2$
   and from \imp{the definition of ${\rm mex}$}, 
   $$
   \mathcal{G}(C_{(+, \mathrm{pass})}^\mathrm{nim}(\mathcal{G}(g_1), \ldots, \mathcal{G}(g_n))) = {\rm mex}(S_1 \cup S_2)
   $$
   holds, so we have
   \begin{eqnarray*}
   \mathcal{G}(C_{(+, \mathrm{pass})}(g_1, \ldots, g_n)) = \mathcal{G}(C^\mathrm{nim}_{(+, \mathrm{pass})}(\mathcal{G}(g_1), \ldots, \mathcal{G}(g_n))). 
   \end{eqnarray*} 
   \qed
\end{proof}
Note that when at least one of $g_i$ is not a one-move game, it is not guaranteed that SG-homomorphism  holds. Let us assume that one of the components, say, $g_1$, is not a one-move game and $g_2, \ldots, g_n$ are one-move games. Then, $g_1$ has a follower $g'_1$ whose SG-value is zero but not a terminal position.
Let $g'_2, \ldots, g'_n$ be terminal positions which can be reached from $g_2, \ldots, g_n$ in one move. Then, 
\begin{eqnarray*}
\mathcal{G}(C^\mathrm{nim}_{(+, \mathrm{pass})}(\mathcal{G}(g'_1), \mathcal{G}(g'_2) \ldots, \mathcal{G}(g'_n))) = \mathcal{G}(C^\mathrm{nim}_{(+, \mathrm{pass})}(0, 0, \ldots, 0)) = 0,
\end{eqnarray*}
but
$\mathcal{G}(C_{(+, \mathrm{pass})}(g'_1, g'_2 \ldots, g'_n)) \neq 0$
since $C_{(+, \mathrm{pass})}(g'_1, g'_2 \ldots, g'_n)$ has an option $C_+(g'_1, g'_2 \ldots, g'_n),$ whose SG-value is
\begin{eqnarray*}
\mathcal{G}(C_+(g'_1, g'_2 \ldots, g'_n) ) = \mathcal{G}(g'_1) \oplus \mathcal{G}(g'_2) \oplus \cdots \oplus \mathcal{G}(g'_n) = 0\oplus 0\oplus \cdots \oplus 0  = 0.
\end{eqnarray*}


From this theorem, the time needed to compute the SG-value of a one-pass compound of one-move games is bounded by
the total time required to compute the SG-values of all the component games, plus
the time required to compute the SG-value of a {\sc nim} position that allows one pass.

Unfortunately, closed formula for SG-values of positions in \textsc{nim} with a pass is \imp{unknown}. However, by using this result, we can expect faster analysis than with brute-force algorithms.
In particular, we have closed formulas for small SG-values in two-pile \textsc{nim} with a pass \imp{as follows}. These results can be used for analyzing games, like some \textsc{chocolate games} demonstrated in the next section. 


Let $\mathcal{G}_\mathrm{P}(x, y) = \mathcal{G}(C_{(+, \mathrm{pass})}^\mathrm{nim}(x,y))$. Table \ref{tab:nimpass} shows the values of $\mathcal{G}_\mathrm{P}(x, y)$.For the cases that $\mathcal{G}_\mathrm{P}(x, y)$ is small, we have the following closed formulas. For the proofs of them, see the appendix.

\begin{table}[tb]
    \caption{Values of $\mathcal{G}_\mathrm{P}(x, y)$.}
    \label{tab:nimpass}

    \centering
    \begin{tabular}{c|ccccccccccccc}
        $x \backslash y$ & 0 & 1 & 2 & 3 & 4 & 5 & 6 & 7 & 8 & 9 & 10 & 11 & 12 \\ \hline
        0 & 0 & 2 & 1 & 4 & 3 & 6 & 5 & 8 & 7 & 10 & 9 & 12 & 11 \\
        1 & 2 & 1 & 0 & 3 & 4 & 5 & 6 & 7 & 8 & 9 & 10 & 11 & 12 \\
        2 & 1 & 0 & 2 & 5 & 7 & 3 & 8 & 4 & 6 & 12 & 11 & 10 & 9\\
        3 & 4 & 3 & 5 & 1 & 0 & 2 & 7 & 6 & 9 & 8 & 12 & 13 & 10 \\
        4 & 3 & 4 & 7 & 0 & 1 & 8 & 9 & 2 & 5 & 6 & 13 & 14 & 15 \\
        5 & 6 & 5 & 3 & 2 & 8 & 1 & 0 & 9 & 4 & 7 & 14 & 15 & 13 \\
        6 & 5 & 6 & 8 & 7 & 9 & 0 & 1 &  3 & 2 & 4 & 15 & 16 & 14 \\
        7 &8 & 7 & 4 & 6 & 2 & 9 & 3 & 1 & 0 & 5 & 16 & 17 & 18 \\
        8 & 7 & 8 & 6 & 9 & 5 & 4 & 2 & 0 & 1 & 3 & 17 & 18 & 16 \\
        9 & 10 & 9 & 12 & 8 & 6 & 7 & 4 & 5 & 3 & 1 & 0 & 19 & 2 \\
        10 & 9 & 10 & 11 & 12 & 13 & 14 & 15 & 16 & 17 & 0 & 1 & 2 & 3 \\
        11 & 12 & 11 & 10 & 13 & 14 & 15 & 16 & 17 & 18 & 19 & 2 & 1 & 0 \\
        12 & 11 & 12 & 9 & 10 & 15 & 13 & 14 & 18 & 16 & 2 & 3 & 0 & 1 \\
                
    \end{tabular}
\end{table}

\begin{lemma}
\label{lem:gv0}
    Let $x$ and $y$ be nonnegative integers.
\begin{enumerate}
\item[(a)]    $\mathcal{G}_\mathrm{P}(x, y) = 0$ if and only if one of the following holds; 
    \begin{itemize}
        \item $x = y = 0$.
        \item $x = 2n - 1$ and $y = 2n$ for a positive integer $n$.
        \item $x = 2n$ and $y = 2n - 1$ for a positive integer $n$.
        
    \end{itemize}

\item[(b)]   $\mathcal{G}_\mathrm{P}(x, y) = 1$ if and only if one of the following holds; 
    \begin{itemize}
        \item $(x, y) \in \{(0, 2), (2, 0)\}$.
        \item $x = y, x \neq 0,$ and $ x \neq 2$.
    \end{itemize}

\item [(c)]    $\mathcal{G}_\mathrm{P}(x, y) = 2$ if and only if one of the following holds; 
    \begin{itemize}
        \item $(x, y) \in \{(0, 1), (1, 0), (2, 2), (3, 5), (4, 7), (5, 3), (6, 8), (7, 4), (8, 6)\}$.
        \item $(x - 1) \oplus (y - 1) = 3$ and $x, y \geq 9$.
    \end{itemize}
\end{enumerate}
\end{lemma}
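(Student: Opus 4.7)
The plan is to establish (a), (b), and (c) by joint strong induction on $x+y$, using the recursion
$$\mathcal{G}_\mathrm{P}(x, y) = \mathrm{mex}(\{\mathcal{G}_\mathrm{P}(x', y) : x' < x\} \cup \{\mathcal{G}_\mathrm{P}(x, y') : y' < y\} \cup \{x \oplus y\}),$$
valid for $(x,y)\neq(0,0)$. Each statement splits into an ``if'' direction (the claimed set is contained in $\mathcal{G}_\mathrm{P}^{-1}(k)$) and an ``only if'' direction (the claimed set exhausts $\mathcal{G}_\mathrm{P}^{-1}(k)$). Base cases below the inductive threshold are settled by direct inspection of Table \ref{tab:nimpass}.

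For (a), the ``if'' direction on $\{x,y\}=\{2n-1,2n\}$ observes that any legal reduction preserves one coordinate, so the resulting pair is neither $(0,0)$ nor a consecutive $\{2m-1,2m\}$ by parity and magnitude, and the pass value $(2n-1)\oplus 2n$ is a positive odd integer; by induction each option has nonzero SG, so the mex is $0$. The ``only if'' direction exhibits an explicit SG-$0$ option: pass when $x=y>0$; reduction to $(0,0)$ when exactly one coordinate is $0$; otherwise move $\max(x,y)$ to the consecutive-pair partner of $\min(x,y)$, which lies strictly below $\max(x,y)$. Part (b) follows the same template: on a diagonal $(x,x)$ with $x\neq 0,2$ the pass supplies SG $0$ and any reduction moves off-diagonal to a position which, by the inductive hypothesis, is not in the SG-$1$ set since $x\neq 2$ rules out $(0,2)$ and $(2,0)$; the exceptional $(0,2)$ and $(2,0)$ are checked from the table. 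Conversely, any $(x,y)$ outside the listed form of (b) and not in $\mathcal{G}_\mathrm{P}^{-1}(0)$ admits an SG-$1$ option obtained generically by reducing the larger coordinate onto the diagonal, with a handful of low-$x,y$ subcases resolved via moves into $(0,2)$ or $(2,0)$ or by the pass.

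Part (c) is the most delicate. The nine enumerated small positions are confirmed from Table \ref{tab:nimpass}. For the infinite family $(x-1)\oplus(y-1)=3$ with $x,y\geq 9$, I parametrize $x-1=4k+r$, $y-1=4k+r'$ with $(r,r')\in\{(0,3),(1,2),(2,1),(3,0)\}$ and $k\geq 2$. An SG-$0$ option arises by moving one coordinate onto the consecutive-pair partner of the other; an SG-$1$ option by moving the larger coordinate down to the smaller, landing on the diagonal with entry $\geq 9 > 2$. Absence of SG-$2$ options is verified in three steps: no single-coordinate reduction can both preserve the high block $4k$ and re-realize the low-XOR $3$, so the family is not self-reachable; neither coordinate can be reduced into one of the nine small SG-$2$ positions, since both coordinates are already $\geq 9$; and the pass value $x\oplus y$ is computed in each of the four parametric subfamilies to be at least $5$. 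The converse direction, after removing SG-$0$ and SG-$1$ positions via (a) and (b), exhibits an SG-$2$ option at every remaining position by adjusting the low two bits of $(x-1, y-1)$ to XOR $3$ while preserving the high bits.

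The main obstacle is the ``only if'' direction of (c) in the transitional zone where one of $x, y$ is close to $9$: the nine small SG-$2$ exceptions can be reached by reductions from slightly-larger positions, so we must combine direct table lookups with the inductive description to guarantee that for each such $(x, y)$ either a small-exception option or an infinite-family option is genuinely available. A secondary subtlety is the pass-value check in (c), where $x \oplus y$ must be analyzed based on the parity of $k$ (via the number of trailing ones in $k$) to confirm that it never equals $2$.
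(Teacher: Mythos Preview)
Your approach is essentially the paper's: strong induction on $x+y$ via the mex recursion, with parts (a) and (b) established first and then fed into (c). The paper organizes (c) slightly differently---it argues via row/column uniqueness of SG-$2$ positions rather than explicitly exhibiting SG-$2$ options---but the content is the same.

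There is one concrete gap in your ``only if'' direction of (c). The method ``adjust the low two bits of $(x-1,y-1)$ to XOR $3$'' does \emph{not} always produce a legal reduction. For $(x,y)=(4m+1,4m+3)$ with $m\ge 2$ (and its mirror $(4m+3,4m+1)$) one computes $((y-1)\oplus 3)+1=4m+2>x$ and $((x-1)\oplus 3)+1=4m+4>y$, so neither coordinate can be lowered into the infinite SG-$2$ family; and since both coordinates exceed $8$, the nine small exceptions are unreachable too. These positions are not confined to your ``transitional zone near $9$''---they occur for every $m\ge 2$. The fix is exactly what the paper does: observe that $x\oplus y=2$ here, so the \emph{pass} itself is the required SG-$2$ option. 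You already analyze $x\oplus y$ in the ``if'' direction; you need it in the ``only if'' direction as well.

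A minor correction in the ``if'' direction: the pass value is not ``at least $5$'' in all four subfamilies. For $(4k+2,4k+3)$ one has $(4k+2)\oplus(4k+3)=1$. What you actually need (and what holds) is that the pass value is never $2$ on the claimed SG-$2$ family: it equals $1$ in the two subfamilies $\{4k+2,4k+3\}$ and equals $4\bigl(k\oplus(k{+}1)\bigr)+1\ge 5$ in the two subfamilies $\{4k+1,4k+4\}$.
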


\section{Application for three-dimensional \textsc{chocolate game}}
\label{sec:choco}

In this section, we show how Theorem \ref{thm:pass} can be used for analyzing games. 

Let us consider a \textsc{chocolate game} in Definition \ref{defofbarwithfunc3d} under the condition that $F(x,z)= h(z)$, where $h$ satisfies NS-property. 
We call this game \textsc{stair chocolate game}. Some examples of positions in the game are shown in Figures \ref{3Dchocolate22}, \ref{3D337}, \ref{3D527}, and \ref{3Dchocolate}. Then, \imp{from Theorem \ref{sufficientcond3d},} the SG-value of the chocolate bar $CB(F,x,y,z)$ in the game is $x \oplus y \oplus z$. 

Note that since $F(x, z) = h(z)$, the value of $x$ is independent from $y$ and $z$. Therefore, we can consider this ruleset as a disjunctive compound of one-heap \textsc{nim} and two-dimensional \textsc{chocolate game}. We \imp{can} use this construction for analyzing \textsc{stair chocolate game} with a pass move.  The position of this game is represented by four coordinates $(x,y,z,p)$, where $p = 1$ if the pass is still available; otherwise, $p = 0$.

We already have  small SG-values of two-dimensional chocolate games where $h$ has NS-property in Lemma \ref{lem:leq8}. We also have  small SG-values of two-pile \textsc{nim} with a pass in Lemma \ref{lem:gv0}. 
By combining these results using Theorem \ref{thm:pass}, we have the following corollaries.
Since we do not have any formula for SG-values or $\mathcal{P}$-positions of three-pile \textsc{nim} with a pass, it is remarkable that we have some formulas for SG-values including $\mathcal{P}$-positions of these three-dimensional \textsc{chocolate games}. For the proofs of these corollaries, see the appendix.

\begin{corollary}
\label{cor:g0}
Let 
\begin{eqnarray*}
A_0 &=& \{(x,y,z,0)\mid   x \oplus y \oplus z=0, y\leq h(z) \}, \\
A_1&=&\{(x,y,z,1)\mid  x \mbox{ is odd}, (x+1)  \oplus  y   \oplus   z =0, y\leq h(z) \}, \\  
A_2&=& \{(x,y,z,1)\mid  x \geq 2, x \mbox{ is even}, (x-1)  \oplus y   \oplus  z =0, y\leq h(z) \},  \text{and}\\
A&=&A_0 \cup A_1 \cup A_2 \cup \{(0,0,0,1) \}.
\end{eqnarray*}
A position $(x, y, z, p)$ is in  $A$ if and only if the position is a 
 $\mathcal{P}$-position.
\end{corollary}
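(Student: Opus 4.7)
The plan is to reduce the analysis to Lemma~\ref{lem:gv0}(a) via Theorem~\ref{thm:pass}. Because $F(x, z) = h(z)$ does not depend on $x$, the moves in the $x$-direction of $CB(F, x, y, z)$ are independent of those in the $(y, z)$-directions. I would first observe that this lets me view the \textsc{stair chocolate game} at position $(x, y, z)$ as the disjunctive compound of a single-pile \textsc{nim} with $x$ tokens and the two-dimensional chocolate bar $CB_2(h, y, z)$; adding a pass-move then yields exactly the one-pass compound of Definition~\ref{def:dispas}.

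Next I would verify that both components are one-move games. Single-pile \textsc{nim} is SG-decreasing and hence one-move. For $CB_2(h, y, z)$, every follower is of the same form $CB_2(h, y', z')$ with $y' \leq h(z')$ and SG-value $y' \oplus z'$; this vanishes iff $y' = z'$, and Lemma~\ref{lem:hbound} gives $h(z') < z'$ whenever $z' \geq 1$, so $y' = z'$ forces $z' = 0$, i.e.\ the follower is terminal. Hence both components are one-move, and Theorem~\ref{thm:pass} yields
\[
\mathcal{G}(x, y, z, 1) \;=\; \mathcal{G}_\mathrm{P}\bigl(x,\, y \oplus z\bigr),
\]
while $\mathcal{G}(x, y, z, 0) = x \oplus y \oplus z$ by Theorem~\ref{thm:disjunctive}.

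It remains to translate the zero condition. For $p = 0$, the equation $x \oplus y \oplus z = 0$ gives exactly $A_0$. For $p = 1$, Lemma~\ref{lem:gv0}(a) says $\mathcal{G}_\mathrm{P}(x, y \oplus z) = 0$ in exactly three subcases. The first, $x = 0$ and $y \oplus z = 0$, combined with $y \leq h(z)$ and Lemma~\ref{lem:hbound}, forces $y = z = 0$, contributing only the isolated point $(0, 0, 0, 1)$. The second, $x = 2n - 1$ with $y \oplus z = 2n$, is equivalent to $x$ odd with $(x+1) \oplus y \oplus z = 0$, which is $A_1$. The third, $x = 2n \geq 2$ with $y \oplus z = 2n - 1$, is equivalent to $x \geq 2$ even with $(x-1) \oplus y \oplus z = 0$, which is $A_2$. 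The main obstacle is the one-move verification of $CB_2$, which crucially uses the strict bound $h(z) < z$ from Lemma~\ref{lem:hbound} to exclude nonterminal followers of SG-value zero; the remainder is a routine rewriting of the three cases of Lemma~\ref{lem:gv0}(a) into the sets $A_1$, $A_2$, and $\{(0, 0, 0, 1)\}$.
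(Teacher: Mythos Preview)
Your proof is correct and follows essentially the same approach as the paper: decompose the stair chocolate position into single-pile \textsc{nim} and $CB_2(h,y,z)$, verify that both components are one-move games, apply Theorem~\ref{thm:pass} to reduce to $\mathcal{G}_\mathrm{P}(x,y\oplus z)$, and then translate the three cases of Lemma~\ref{lem:gv0}(a) into $\{(0,0,0,1)\}$, $A_1$, and $A_2$. Your justification of the one-move property for $CB_2$ via Lemma~\ref{lem:hbound} is in fact more explicit than the paper's, which simply asserts that every non-terminal position has the terminal position as an option.
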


\begin{corollary}
\label{cor:g1}
Let 
\begin{eqnarray*} 
B_0&=&\{(x,y,z,0)\mid x \oplus y \oplus z = 1, y \leq h(z)\}, \\
B_1 &=&\{(x,y,z,1)\mid(x, y, z)\in \{(0, 0, 2),  (0, 1, 3),  (2, 0, 0)\}, y \leq h(z)\}, \\
B_2 &=&\{(x,y,z,1)\mid x \oplus y \oplus z = 0,  y \leq h(z) \}, \\
B_3&=&\{(x,y,z,1)\mid(x, y, z)\in \{(0,0,0), (2,0,2),(2,1,3)\}, y\leq h(z)\}, \text{and} \\ 
 B&=&B_0 \cup B_1 \cup B_2-B_3.
 \end{eqnarray*}
A position $(x, y ,z, p)$ is in $B$ if and only if the position has SG-value $1$.
\end{corollary}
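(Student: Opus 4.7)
The plan is to decompose the stair chocolate game as a one-pass compound of two one-move components: a single-pile \textsc{nim} with $x$ tokens (governing the $x$-axis of the bar) and a two-dimensional chocolate game $CB_2(h, y, z)$ (governing the $y$- and $z$-axes). Because $F(x, z) = h(z)$ does not depend on $x$, the $x$-coordinate evolves independently of $(y, z)$, justifying the disjunctive structure. Since $h$ has NS-property, the two-dimensional component has SG-value $y \oplus z$, which vanishes only at the terminal position; indeed, any non-terminal follower with $y = z$ would require $z \leq h(z)$, impossible for $z > 0$ by Lemma~\ref{lem:hbound}. Single-pile \textsc{nim} is trivially a one-move game. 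Hence both components satisfy the hypotheses of Theorem~\ref{thm:pass}.

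Applying Theorem~\ref{thm:pass}, when the pass is still available ($p = 1$) the SG-value equals $\mathcal{G}_\mathrm{P}(x, y \oplus z)$; when the pass has already been spent ($p = 0$) the position reduces, by Theorem~\ref{thm:disjunctive}, to a disjunctive compound of SG-value $x \oplus y \oplus z$. The case $p = 0$ immediately accounts for the set $B_0$.

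For $p = 1$, I would invoke Lemma~\ref{lem:gv0}(b): $\mathcal{G}_\mathrm{P}(x, w) = 1$ if and only if $(x, w) \in \{(0, 2), (2, 0)\}$ or $x = w \notin \{0, 2\}$. Substituting $w = y \oplus z$, the equality $x = y \oplus z$ becomes $x \oplus y \oplus z = 0$, which is precisely the membership condition of $B_2$; the side condition $x \notin \{0, 2\}$ says that the triples in $B_2$ with $x \in \{0, 2\}$ must be discarded. A short enumeration using Lemma~\ref{lem:leq8} (for $y \oplus z = 2$ with $y \leq h(z)$, only $(y, z) \in \{(0, 2), (1, 3)\}$ qualify) together with Lemma~\ref{lem:hbound} (for $y \oplus z = 0$ with $y \leq h(z)$, only $(y, z) = (0, 0)$ qualifies) shows that these excluded triples are exactly $\{(0, 0, 0), (2, 0, 2), (2, 1, 3)\}$, which is $B_3$. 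The exceptional pairs $(x, w) \in \{(0, 2), (2, 0)\}$ translate, via the same enumeration, to the three triples $\{(0, 0, 2), (0, 1, 3), (2, 0, 0)\}$ that comprise $B_1$.

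Combining, a position $(x, y, z, 1)$ has SG-value $1$ if and only if it lies in $(B_2 \setminus B_3) \cup B_1$, and including the $p = 0$ contribution gives $B = B_0 \cup B_1 \cup B_2 - B_3$, matching the statement. The main obstacle is the bookkeeping in this final step: verifying that the finite lists defining $B_1$ and $B_3$ really enumerate all relevant $(y, z)$ with $y \leq h(z)$ and $y \oplus z \in \{0, 2\}$. This is where Lemmas~\ref{lem:leq8} and~\ref{lem:16} carry the load, by bounding $z$ and thereby reducing an otherwise infinite case analysis to a check against Table~\ref{tab:NS}.
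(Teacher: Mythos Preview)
Your proof is correct and follows essentially the same approach as the paper: decompose the stair chocolate position as a one-pass compound of single-pile \textsc{nim} and the two-dimensional bar $CB_2(h,y,z)$, apply Theorem~\ref{thm:pass} to reduce to $\mathcal{G}_\mathrm{P}(x, y\oplus z)$, and then invoke Lemma~\ref{lem:gv0}(b) together with Lemma~\ref{lem:leq8} to translate the two cases into $B_1$ and $B_2\setminus B_3$. Your write-up is in fact slightly more explicit than the paper's, which defers the one-move-game verification to the proof of Corollary~\ref{cor:g0} and leaves the enumeration for $B_3$ implicit.
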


\begin{corollary}
\label{cor:g2}

Let 
\begin{eqnarray*}
 C_0 &=& \{(x, y, z, 0)\mid x \oplus y \oplus z = 2, y\le h(z)\}, \\
C_1 &=&\{(x,y,z,1)\mid (x,y,z)\in \{  (0, 0, 1), (1, 0, 0), (2, 0, 2), (2, 1, 3), (3, 0, 5) ,  
 (3, 1, 4),(3, 2, 7), \\ & &(3, 3, 6), (4, 0, 7), (4, 1, 6), (4, 2, 5), (4, 3 ,4), (5, 0, 3), \ (5, 1, 2), (6, 0, 8), (6, 1, 9), \\ & &(6, 2, 10), (6, 3, 11), (6, 4, 12),(6, 5, 13), (6, 6, 14), (6, 7, 15), (7, 0, 4), (7, 1, 5), (7, 2, 6), \\ & &(7, 3, 7), (8, 0, 6), 
  (8, 1, 7),(8, 2, 4), (8, 3, 5) \}, y \leq h(z) \},  \\
C_2 &=& \{(x, y, z, 1)\mid (((x - 1) \oplus 3) + 1) \oplus y \oplus z = 0, y \le h(z)\} \\
C_3 &=& \{(x,y,z,1)\mid (x,y,z)\in \{  (1, 0, 4), (1, 1, 5), (1, 2, 6), (1, 3, 7), (2, 0, 3),   (2, 1, 2), (3, 0, 2),  \\ & &(3, 1, 3), (4, 0, 1), (5, 0, 8), (5, 1, 9), (5, 2, 10), (5, 3, 11), (5, 4, 12),   (5, 5, 13),(5, 6, 14), \\ & &(5, 7, 15), (6, 0, 7), (6, 1, 6), (6, 2, 5), (6, 3, 4),(7, 0, 6), (7, 1, 7), (7, 2, 4), (7, 3, 5), (8, 0, 5), \\ & &(8, 1, 4), (8, 2, 7),  (8, 3, 6)\}, y\leq h(z) \}, \text{and} \\
 C &=& C_0 \cup C_1 \cup C_2 - C_3.
\end{eqnarray*}
A position $(x, y, z, p)$ is in $C$ if and only if the position has SG-value $2$.
\end{corollary}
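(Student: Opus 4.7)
The plan is to view the \textsc{stair chocolate game} with a pass as the one-pass compound of a one-pile \textsc{nim} (the $x$-coordinate) and the two-dimensional \textsc{chocolate game} $CB_2(h, y, z)$ (the $(y, z)$-coordinates), and then apply Theorem~\ref{thm:pass}. To invoke that theorem I first verify that both components are one-move games. One-pile \textsc{nim} is SG-decreasing, hence one-move. For $CB_2(h, y, z)$, Lemma~\ref{lem:hbound} gives $h(z) < z$ for $z > 0$; combined with the constraint $y \leq h(z)$ this forces $y < z$ whenever $z > 0$, so its SG-value $y \oplus z$ vanishes only at the terminal position $(0, 0)$. Theorem~\ref{thm:pass} then yields
\begin{equation*}
\mathcal{G}(x, y, z, 0) = x \oplus y \oplus z, \qquad \mathcal{G}(x, y, z, 1) = \mathcal{G}_\mathrm{P}(x, y \oplus z),
\end{equation*}
reducing the problem to identifying pairs $(x, m)$ with $\mathcal{G}_\mathrm{P}(x, m) = 2$ and then, for each, enumerating triples $(y, z)$ with $y \oplus z = m$ and $y \leq h(z)$.

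For $p = 0$ the SG-value equals $2$ iff $x \oplus y \oplus z = 2$, which is the definition of $C_0$. For $p = 1$, Lemma~\ref{lem:gv0}(c) says $\mathcal{G}_\mathrm{P}(x, m) = 2$ (with $m = y \oplus z$) iff $(x, m)$ lies in the nine-element exceptional list or else $(x-1) \oplus (m-1) = 3$ with $x, m \geq 9$. For each exceptional pair I use Lemma~\ref{lem:leq8} to list all $(y, z)$ with $y \oplus z = m$ and $y \leq h(z)$; collecting these reproduces exactly the triples in $C_1$. The infinite-family relation rewrites as $m = ((x-1) \oplus 3) + 1$, and under the substitution $m = y \oplus z$ this is the defining relation of $C_2$.

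The set $C_2$, however, drops the constraints $x, m \geq 9$, so it over-includes triples with small $x$. Since XOR with $3$ flips only the two lowest bits, $(x-1) \oplus 3 \geq 8$ iff $x - 1 \geq 8$, hence $((x-1) \oplus 3) + 1 \geq 9$ iff $x \geq 9$; consequently the over-inclusion is exactly the triples with $x \in \{1, \ldots, 8\}$. For each such $x$, the forced value $m = ((x-1) \oplus 3) + 1 \leq 8$ is small, and enumerating $(y, z)$ with $y \oplus z = m$ and $y \leq h(z)$ via Lemma~\ref{lem:leq8} reproduces precisely the 29 triples listed in $C_3$. A direct check also confirms that no pair $(x, m)$ from the exceptional list satisfies $(x-1) \oplus (m-1) = 3$, so $C_1 \cap C_3 = \emptyset$ and $C_3$ removes nothing from $C_1$. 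Therefore $C_0 \cup C_1 \cup (C_2 \setminus C_3)$ is precisely the set of SG-value~$2$ positions. The principal obstacle is the combinatorial bookkeeping in this last step: verifying that the 29 explicit triples of $C_3$ exactly exhaust the over-inclusion of $C_2$ for $x \leq 8$ and that they are disjoint from $C_1$, which amounts to a routine case analysis across the eight values of $x$.
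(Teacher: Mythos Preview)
Your proof is correct and follows essentially the same approach as the paper: decompose the position as a one-pass compound of one-pile \textsc{nim} in the $x$-coordinate and the two-dimensional \textsc{chocolate game} $CB_2(h,y,z)$, invoke Theorem~\ref{thm:pass}, and then translate the two cases of Lemma~\ref{lem:gv0}(c) via Lemma~\ref{lem:leq8} into the sets $C_1$ and $C_2\setminus C_3$. Your write-up is in fact more explicit than the paper's on two points---the equivalence $x\ge 9 \Leftrightarrow ((x-1)\oplus 3)+1\ge 9$ that pins down $C_3$ as exactly the over-inclusion of $C_2$, and the disjointness $C_1\cap C_3=\emptyset$ ensuring the subtraction is harmless---but these are refinements of the same argument rather than a different route.
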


\subsubsection{\ackname} 
The authors have no competing interests to declare that are
relevant to the content of this article. The authors extend their gratitude to Mr. Keito Tanemura, Mr. Yuji Sasaki, and Mr. Yuki Tokuni for their help on developing a calculation program for SG-values. 






%
%
%
 \bibliographystyle{splncs04}
 \bibliography{SOFSEM}

\appendix
\section{Omitted proofs}\label{sec:proofs}
\subsection{Proof of Proposition \ref{lemmaforfloorzbyk}}
\begin{enumerate}
\item
\imp{Assume that $\lfloor \frac{z}{2^i}\rfloor = \lfloor \frac{z^{\prime}}{2^i}\rfloor$. Then, }
there exists
$u \in \mathbb{Z}_{\geq 0}$ such that 
\begin{displaymath}
z=u\times 2^i + \sum\limits_{j = 0}^{i-1} {{z_j}} {2^j} \nonumber 
\end{displaymath}
and
\begin{displaymath}
z^{\prime}=u\times 2^i + \sum\limits_{j = 0}^{i-1} 
{{z^{\prime}_j}} {2^j}, \nonumber 
\end{displaymath}
where $z_j, z^{\prime}_j \in \{0,1\}$ for $j=0,1, \cdots, i-1.$
Then, there exist $s,t \in \mathbb{Z}_{\geq 0}$ such that $u=kt+s$ and $0 \leq s <k$, and we have 
\begin{displaymath}
0 \leq \frac{z}{2k}-t2^{i-1} =\frac{s}{k}2^{i-1}+ \frac{1}{k}(z_{i-1}2^{i-2}+z_{i-2}2^{i-3}+ \cdots + z_02^{-1}) <\frac{s+1}{k}2^{i-1} \leq 2^{i-1}. \nonumber 
\end{displaymath}
and
\begin{displaymath}
0 \leq \frac{z^\prime}{2k}-t2^{i-1} =\frac{s}{k}2^{i-1}+ \frac{1}{k}(z^{\prime}_{i-1}2^{i-2}+z^{\prime}_{i-2}2^{i-3}+ \cdots + z^{\prime}_02^{-1}) <\frac{s+1}{k}2^{i-1} \leq 2^{i-1}. \nonumber 
\end{displaymath}
Then, we have 
\begin{displaymath}
t \leq \left\lfloor \dfrac{\left\lfloor \dfrac{z}{2k}\right\rfloor}{2^{i-1}}\right\rfloor
, \left\lfloor \dfrac{\left\lfloor \dfrac{z^{\prime}}{2k}\right\rfloor}{2^{i-1}}\right\rfloor
< t+1, \nonumber 
\end{displaymath}
and hence 
\begin{displaymath}
\left\lfloor \frac{h(z)}{2^{i-1}}\right\rfloor =\left\lfloor \frac{\left\lfloor \frac{z}{2k}\right\rfloor}{2^{i-1}}\right\rfloor
= t =\left\lfloor \frac{\left\lfloor \frac{z^{\prime}}{2k}\right\rfloor}{2^{i-1}}\right\rfloor
=\left\lfloor \frac{h(z^{\prime})}{2^{i-1}}\right\rfloor, \nonumber 
\end{displaymath}
thus, we have 
\imp{$\lfloor \frac{h(z)}{2^{i-1}} \rfloor = \lfloor \frac{h(z^{\prime})}{2^{i-1}}\rfloor$.}
\item Suppose that 
$\lfloor \frac{z}{2^i} \rfloor = \lfloor \frac{z^{\prime}}{2^i} \rfloor$. Then,
$z=u \times 2^i + v$ and $z^{\prime}=u \times 2^i + v^{\prime}$
for some $u,v,v^{\prime} \in \mathbb{Z}_{\ge 0}$ such that $0 \leq v,v^{\prime} < 2^i$.
\begin{itemize}
\item[(i)] If $u >0$, there exists $t \in \mathbb{Z}_{\ge 0}$ such that 
$2^t \leq u < 2^{t+1}$.
Then, 
$2^t \leq u+\frac{v}{2^i}, u+\frac{v^{\prime}}{2^i}  < 2^{t+1}$, and hence
$2^{i+t} \leq 2^i(u+\frac{v}{2^i}), 2^i(u+\frac{v^{\prime}}{2^i}) < 2^{i+t+1}$.
Therefore, 
$\lfloor \log_2z \rfloor = i+t = \lfloor \log_2z^{\prime} \rfloor$, and 
we have $h(z)=h(z^{\prime})$.\\
\item[(ii)] If $u =0$, then $z, z^{\prime} < 2^i$.
Then, $\lfloor \log_2z \rfloor, \lfloor \log_2z^{\prime} \rfloor \leq i-1$, and hence
$ \lfloor \frac{2^{\lfloor \log_2z \rfloor}-1}{2^{i-1}} \rfloor$ 
$= \lfloor \frac{2^{\lfloor \log_2z^{\prime} \rfloor}-1}{2^{i-1}} \rfloor =0.$
Therefore, we have 
\begin{equation}
\left  \lfloor \frac{h(z)}{2^{i-1}}\right  \rfloor = \left  \lfloor \frac{h(z^{\prime})}{2^{i-1}}\right  \rfloor=0. \nonumber
\end{equation}
\end{itemize}
From (i) and (ii), in each case, $\left  \lfloor \dfrac{h(z)}{2^{i-1}}\right  \rfloor = \left  \lfloor \dfrac{h(z^{\prime})}{2^{i-1}}\right  \rfloor$ holds.
\qed
\end{enumerate}
\subsection{Proof of Lemma \ref{lem:gv0}}
\begin{enumerate}
\item[(a)]    We prove this by induction on $x+y$.
    From Definition \ref{def:dispas}, $\mathcal{G}_\mathrm{P} (0, 0) = 0$.
    Assume that for any positive integer $n' < n,$ the statement holds. Then for any $k < 2n - 1, \mathcal{G}_\mathrm{P}(2n, k) \neq 0, \mathcal{G}_\mathrm{P}(2n - 1, k) \neq 0, \mathcal{G}_\mathrm{P}(k, 2n) \neq 0,$ and $\mathcal{G}_\mathrm{P}(k, 2n-1) \neq 0$ since each $C^\mathrm{nim}_{(+, \mathrm{pass})}(2n, k), C^\mathrm{nim}_{(+, \mathrm{pass})}(2n-1, k), C^\mathrm{nim}_{(+, \mathrm{pass})}(k, 2n), $ and $C^\mathrm{nim}_{(+, \mathrm{pass})}(k, 2n-1)$ has an option whose SG-value is $0$.
    We also have $\mathcal{G}_\mathrm{P}(2n-1, 2n-1) \neq 0$ since  $C^\mathrm{nim}_{(+, \mathrm{pass})}(2n-1, 2n-1)$ has an option to $C^\mathrm{nim}_{+}(2n-1, 2n-1),$ whose SG-value is $(2n-1) \oplus (2n - 1) = 0.$ 
    
    Therefore, we have $\mathcal{G}_\mathrm{P}(2n, 2n -1) = 0.$
    Similarly, $\mathcal{G}_\mathrm{P}(2n - 1, 2n) = 0$ holds.
    
\item [(b)] We prove that $\mathcal{G}_\mathrm{P}(x, y) = 1$ if and only if $x + y  =2 $ or $x = y > 2$ holds. This statement is equivalent to this lemma. We use induction on $x+y$.
    
    It is easy to confirm that $\mathcal{G}_\mathrm{P}(x, y) = 1$ when $x + y = 2$. This means that $\mathcal{G}_\mathrm{P}(x, y) \neq 1$ when $(x, y) \not\in \{(0, 2), (1, 1), (2, 0)\}$ and $\min(\{x, y\}) \leq 2$. 
    Consider the case $\min(\{x, y\}) > 2$. Assume that for any $n' < n$, $\mathcal{G}_\mathrm{P}(n', n') = 1$. Then, for any $k \neq n', \mathcal{G}_\mathrm{P}(n', k) \neq 1$ and $\mathcal{G}_\mathrm{P}(k, n') \neq 1$ hold. 
    
    Since $n \oplus n = 0$,  we have  $\mathcal{G}_\mathrm{P}(n, n) = \mathrm{mex}(\{\mathcal{G}_\mathrm{P}(n', n) \mid n' < n\} \cup \{\mathcal{G}_\mathrm{P}(n, n')\mid n' < n\} \cup \{0\}) = 1$. 

\item [(c)]   We prove this by induction on $x+y$.
    Note that when $(x - 1) \oplus (y - 1) = 3,$ we have $(x, y) = (4n+1, 4n+4), (4n+2, 4n + 3), (4n+3, 4n+2),$ or $(4n + 4, 4n + 1)$ for an integer $n$.
    
    From Table \ref{tab:nimpass}, it is easy to confirm that when $(x,y) \in  \{(0, 1), (1, 0), (2, 2), \allowbreak(3, 5), (4, 7), (5, 3), \allowbreak(6, 8), (7, 4), (8, 6)\}$, $\mathcal{G}_\mathrm{P}(x, y) = 2$. This means that if $\min(\{x, y\}) \leq 8$ and $(x, y) \not \in \{(0, 1), (1, 0),\allowbreak (2, 2), (3, 5), (4, 7), (5, 3), (6, 8), (7, 4), (8, 6)\}, \mathcal{G}_\mathrm{P}(x, y) \neq 2.$

    Let $n \geq 2$.
    Assume that for any $n' < n, $ $ \mathcal{G}_\mathrm{P}(4n'+1, 4n'+4) = \mathcal{G}_\mathrm{P} (4n'+2, 4n' + 3) = \mathcal{G}_\mathrm{P} (4n'+3, 4n'+2) = \mathcal{G}_\mathrm{P}(4n' + 4, 4n' + 1) = 2.$ Then, we have $\mathcal{G}_\mathrm{P}(4n'+1, k) \neq 2$ for any $k \neq 4n' + 4, \mathcal{G}_\mathrm{P}(4n'+2, k) \neq 2$ for any $k \neq 4n' + 3, $
    $\mathcal{G}_\mathrm{P}(4n'+3, k) \neq 2$ for any $k \neq 4n' + 2, \mathcal{G}_\mathrm{P}(4n'+4, k) \neq 2$ for any $k \neq 4n' + 1, \mathcal{G}_\mathrm{P}(k, 4n'+1) \neq 2$ for any $k \neq 4n' + 4, \mathcal{G}_\mathrm{P}(k, 4n'+2) \neq 2$ for any $k \neq 4n' + 3, \mathcal{G}_\mathrm{P}(k, 4n'+3) \neq 2$ for any $k \neq 4n' + 2, $ and 
    $\mathcal{G}_\mathrm{P}(k, 4n'+4) \neq 2$ for any $k \neq 4n' + 1.  $
    
    We also have $\mathcal{G}_\mathrm{P}(4n + 1, 4n + 1) = \mathcal{G}_\mathrm{P}(4n + 2, 4n + 2) = 1$ from Lemma \ref{lem:gv0} (b), $\mathcal{G}_\mathrm{P}(4n + 1, 4n + 2) = \mathcal{G}_\mathrm{P}(4n + 2, 4n + 1) = 0$ from Lemma \ref{lem:gv0} (a), $\mathcal{G}_\mathrm{P}(4n + 1, 4n + 3) \neq 2$ since $(4n + 1) \oplus (4n + 3) = 2$, $\mathcal{G}_\mathrm{P}(4n + 3, 4n + 1) \neq 2 $ since $(4n + 3) \oplus (4n + 1) = 2$, $(4n + 1) \oplus (4n + 4) = (4n + 4) \oplus (4n + 1) = 5,$ and $(4n + 2) \oplus (4n + 3) = (4n + 3) \oplus (4n + 2) = 1$. From them, we have $\mathcal{G}_\mathrm{P}(4n + 1, 4n + 4) = \mathcal{G}_\mathrm{P}(4n + 2, 4n + 3) = \mathcal{G}_\mathrm{P}(4n+3, 4n+2) = \mathcal{G}_\mathrm{P}(4n+ 4, 4n + 1) = 2.$ \qed 
\end{enumerate}

\subsection{Proof of Corollary \ref{cor:g0}}
    A position in \textsc{stair chocolate game} can be considered as a disjunctive (resp. one-pass) compound of two positions in different rulesets when there is no (resp. one) pass-move; one component $G(x)$ is one-pile \textsc{nim} with $x$ tokens and the other component $H(y,z)$ is two-dimensional \textsc{chocolate game} $CB_2(h, y, z)$. 
    In the two-dimensional \textsc{chocolate game}, from every position except for the terminal position, one can move to the terminal position. Also, in one-pile \textsc{nim}, 
    one can move to the terminal position from any non-terminal position. Therefore, from Theorem \ref{thm:pass} and Lemma \ref{lem:gv0} (a), the position $C_{(+, \mathrm{pass})}(G(x), H(y,z)) $ is a $\mathcal{P}$-position if and only if one of the followings holds: 
    \begin{itemize}
        \item[(i)] $\mathcal{G}(G(x)) = \mathcal{G}(H(y, z)) = 0$.
        \item[(ii)] $\mathcal{G}(G(x)) = \mathcal{G}(H(y,z))- 1$ and $\mathcal{G}(G(x))$ is a positive odd integer.
        \item[(iii)] $\mathcal{G}(G(x)) = \mathcal{G}(H(y,z)) + 1$ and $\mathcal{G}(G(x))$ is a positive even integer.
    \end{itemize}
    Condition (i) is a necessary and sufficient condition for $(x,y ,z, p)= (0, 0, 0, 1)$.

    Condition (ii) is a necessary and sufficient condition for $(x, y, z, p) \in A_1.$ 

    Condition (iii) is a necessary and sufficient condition for $(x, y, z, p) \in A_2.$ 

    In addition, $C_+(G(x), H(y,z))$ is a $\mathcal{P}$-position if and only if  $\mathcal{G}(G(x)) = \mathcal{G}(H(y,z))$.      
    This is a necessary and sufficient condition for $(x, y, z, p) \in A_0$ 

    Therefore, a position $(x, y, z, p)$ is a $\mathcal{P}$-position if and only if $(x, y, z, p) \in A$. \qed

\subsection{Proof of Corollary \ref{cor:g1}}
    Similar to the proof of Corollary \ref{cor:g0}, we can use Theorem \ref{thm:pass} for the position $C_{(+, \mathrm{pass})}(G(x), H(y,z)) $ where $G(x)$ is a position in one-pile \textsc{nim} and $H(y,z)$ is a position in two-dimensional \textsc{chocolate game}. 
    From Lemma \ref{lem:gv0} (b), the position has SG-value $1$ if and only if one of the followings holds: 
    \begin{itemize}
        \item[(i)] $(\mathcal{G}(G(x)),   \mathcal{G}(H(y,z)))  \in \{(0, 2) , (2, 0)\}.$
        \item[(ii)] $\mathcal{G}(G(x)) = \mathcal{G}(H(y,z))$, $\mathcal{G}(G(x)) \neq 0, $ and $ \mathcal{G}(G(x)) \neq 2$.
    \end{itemize}
    Condition (i) is a necessary and sufficient condition for $(x, y ,z, p) \in B_1$ because from Lemma \ref{lem:leq8}, $H(y,z)$ has SG-value $2$ if and only if $(y, z) \in \{ (0, 2), (1, 3)\}$.

    Condition (ii) is a necessary and sufficient condition for $(x, y ,z, p) \in B_2 - B_3$.

    In addition, $C_+(G(x), H(y, z))$ has SG-value $1$ if and only if $\mathcal{G}(G(x)) \oplus \mathcal{G}(H(y,z)) = x\oplus (y\oplus z) = 1$. That is, $(x, y, z, p) \in B_0$.

    Therefore, $(x, y, z, p) \in B$ if and only if the position has SG-value $1$. \qed
\subsection{Proof of Corollary \ref{cor:g2}}


Similar to the proofs of Corollaries \ref{cor:g0} and \ref{cor:g1}, we can use Theorem \ref{thm:pass} for the position $C_{(+, \mathrm{pass})}(G(x), H(y,z))$ where $G(x)$ is a position in one-pile \textsc{nim} and $H(y,z)$ is a position in two-dimensional \textsc{chocolate game}. From Lemma \ref{lem:gv0} (c), the position has SG-value $2$ if and only if one of the following holds: 

\begin{itemize}
    \item[(i)] $(\mathcal{G}(G(x)), \mathcal{G}(H(y,z))) \in \{(0, 1), (1, 0), (2, 2), (3, 5), (4, 7), (5, 3), (6, 8), (7, 4), (8, 6)\}$.
        \item[(ii)] $(\mathcal{G}(G(x)) - 1) \oplus (\mathcal{G}(H(y,z)) - 1) = 3$ and $\mathcal{G}(G(x)), \mathcal{G}(H(y,z)) \geq 9$.
\end{itemize}
From Lemma \ref{lem:leq8}, Condition (i) is a necessary and sufficient condition for $(x, y, z, p) \in C_1$. 

Consider Condition (ii). If $(a - 1) \oplus (b - 1) = 3,$ then $b =  ((a- 1) \oplus 3)+1$  holds. Therefore, Condition (ii) is a necessary and sufficient condition for $(x, y, z, p) \in C_2- C_3$.

In addition, $C_+(G(x), H(y,z))$ has SG-value $2$ if and only if $\mathcal{G}(G(x)) \oplus \mathcal{G}(H(y,z)) = x \oplus (y \oplus z) = 2$. That is, $(x, y, z, p) \in C_0$.

Therefore, $(x, y, z, p) \in C$ if and only if the position has SG-value $2$. \qed

\end{document}